\documentclass[11pt]{article}

\usepackage[a4paper]{geometry}
\geometry{left=3cm,right=3cm,top=2.5cm} 

\usepackage{times}
\usepackage[english]{babel}
\usepackage[latin1]{inputenc}
\usepackage{lmodern}
\usepackage[T1]{fontenc}
\usepackage{amsmath,amssymb}
\usepackage{amsthm}
\usepackage{amsfonts}
\usepackage[pdftex]{color, graphicx}
\usepackage{tikz}
\usepackage{hyperref}

\theoremstyle{plain}
	\newtheorem{theorem}{Theorem}[section]
	\newtheorem{proposition}[theorem]{Proposition}
	\newtheorem{lemma}[theorem]{Lemma}
	\newtheorem{corollary}[theorem]{Corollary}
	\newtheorem*{claim}{Claim}
	
\theoremstyle{definition}
	\newtheorem{definition}[theorem]{Definition}
	\newtheorem{notation}[theorem]{Notation}

	\newtheorem{question}[theorem]{Question}
\theoremstyle{remark}
	\newtheorem{remark}[theorem]{Remark}

\newcommand{\RR}{\mathbb{R}}

\newcommand{\QQ}{\mathbb{Q}}
\newcommand{\NN}{\mathbb{N}}
\DeclareMathOperator{\dom}{dom}
\DeclareMathOperator{\last}{last}
\newcommand{\CQQ}{\alg_\RR(\QQ)}
\DeclareMathOperator{\alg}{alg}
\DeclareMathOperator{\IS}{IS}
\newcommand{\res}{\upharpoonright}
\newcommand{\cp}[1]{\left( #1 \right)}

\newcommand{\ap}[1]{\langle #1 \rangle}
\newcommand{\bp}[1]{\left\lbrace #1 \right\rbrace}

\newcommand{\vgt}[1]{``#1''}

\title{Some algebraic equivalent forms of $\RR \subseteq L$ }
\author{Silvia Steila}
\date{}

\begin{document}
\maketitle

\begin{abstract}
We study $\Sigma^1_2$ definable counterparts for some algebraic equivalent forms of the Continuum Hypothesis. All turn out to be equivalent to \vgt{all reals are constructible}.
\end{abstract}

\section{Introduction}

Sierpinski showed that the Continuum Hypothesis (CH) holds if and only if there are sets $A, B \subseteq \RR^2$ such that $A \cup B =\RR^2$ and for any $a,b \in \RR$ the sections $A_a=\bp{y : (a,y) \in A}$ and $B^b= \bp{x : (x,b) \in B}$ are countable \cite{Sierpinski}. In \cite{TornquistWeiss1, TornquistWeiss} T\"{o}rnquist and Weiss studied many $\Sigma^1_2$ definable versions of some equivalent forms of CH which happen to be equivalent to \vgt{all reals are constructible}. For instance, they proved the $\Sigma^1_2$ counterpart of Sierpinski's equivalence: $\RR \subseteq L$ if and only if there are $\Sigma^1_2$ sets $A, B \subseteq \RR^2$  such that $A_1 \cup A_2 =\RR^2$ and for any $a,b \in \RR$ all sections $A_a=\bp{y : (a,y) \in A}$ and $B^b= \bp{x : (x,b) \in B}$ are countable.

We follow their scheme to get some algebraic forms of $\RR \subseteq L$. While T\"{o}rnquist and Weiss considered $\Sigma^2_1$ statements of the form \vgt{there exist finitely many objects such that something happens}, the first algebraic statements we analyze, namely the $\Sigma^1_2$ counterparts of Erd\H{o}s and Kakutani's equivalence  \cite{ErdosKakutani} and of Zoli's equivalence  \cite{Zoli}, require the existence of countably many objects. 
\begin{theorem}\label{Thm:1}
The following are equivalent:
\begin{enumerate}
\item $\RR \subseteq L$;
\item  there is a countable partition of $\RR$ into $\Sigma^1_2$-uniformly definable subsets consisting only of rationally independent numbers;
\item the set of all transcendental reals is the union of countably many uniformly $\Sigma^1_2$ definable algebraically independent subsets.
\end{enumerate}
\end{theorem}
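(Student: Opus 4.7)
The plan is to prove the three implications $(1) \Rightarrow (2)$, $(1) \Rightarrow (3)$, and $(2),(3) \Rightarrow (1)$. The forward implications carry out the Erd\H os--Kakutani and Zoli constructions transfinitely along the $\Sigma^1_2$-good well-order $<_L$ on $\RR$ afforded by $\RR \subseteq L$; the backward implications combine Mansfield's perfect set theorem with a T\"ornquist--Weiss style uniformization.

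For $(1) \Rightarrow (2)$, I would enumerate $\RR$ as $\{r_\alpha : \alpha < \omega_1\}$ via $<_L$ and recursively let $n(\alpha) \in \omega$ be the least $n$ such that $r_\alpha$ lies outside $\mathrm{span}_\QQ\{r_\beta : \beta < \alpha,\ n(\beta) = n\}$, mirroring the classical Erd\H os--Kakutani argument under CH. Setting $R_n = \{r_\alpha : n(\alpha) = n\}$ partitions $\RR$ into rationally independent subsets, and since $<_L$, countable initial segments, and $\QQ$-span membership are all expressible at level $\Sigma^1_2$, the partition is uniformly $\Sigma^1_2$-definable. Part $(1) \Rightarrow (3)$ is parallel: enumerate the transcendentals via $<_L$ and run Zoli's algebraic analogue of the recursion, placing each transcendental in a class that remains algebraically independent over $\overline{\QQ}\cap\RR$.

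For $(2) \Rightarrow (1)$, and $(3) \Rightarrow (1)$ analogously, suppose the uniform $\Sigma^1_2$ partition exists and let $f : \RR \to \omega$ be the $\Sigma^1_2$ class-assignment. By rational independence, for each nonzero $x$ the map $q \mapsto f(qx)$ on $\QQ^*$ is injective, so sending $x$ to the unique coset-representative with minimal $f$-value defines a $\Sigma^1_2$ transversal $T \subseteq \RR^*$ for the equivalence $x \sim y \iff x/y \in \QQ^*$. If $T \subseteq L$, then $\RR^* = \QQ^* \cdot T \subseteq L$ and we are done. By Mansfield's theorem the only alternative is that $T$ contains a perfect set, and the heart of the argument must exclude this possibility; adapting the T\"ornquist--Weiss scheme, one shows that a $\Sigma^1_2$-definable perfect transversal for $\RR^*/\QQ^*$ cannot exist. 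This is the main obstacle I anticipate: porting their reasoning, which in the Sierpinski case leveraged countable $\Sigma^1_2$ fibres to apply Mansfield, to the present setting where the natural countable fibres are the cosets of a countable equivalence relation. The algebraic case is handled identically with the equivalence $\overline{\QQ(x)} = \overline{\QQ(y)}$ in place of $\QQ^*$-multiplication, and the transversal becomes essentially a $\Sigma^1_2$ transcendence basis for $\RR$ over $\overline{\QQ}\cap\RR$.
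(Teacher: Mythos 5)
Your backward direction contains the decisive gap, and you flag it yourself. After extracting a $\Sigma^1_2$ transversal $T$ for the Vitali-type equivalence $x \sim y \iff x/y \in \QQ\setminus\{0\}$, Mansfield--Solovay leaves you in the case that $T$ contains a perfect set, and you have no argument to exclude it. In fact this case cannot be excluded by regularity considerations: perfect partial transversals for this equivalence relation exist outright (e.g.\ inside a perfect algebraically independent set, which exists by Mycielski's theorem), and the hypothesis $\RR \nsubseteq L$ supplies only one non-constructible real, not measurability or the Baire property for $\Sigma^1_2$ sets, so a $\Sigma^1_2$ transversal with a perfect subset is not in itself absurd. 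The paper's argument is entirely different and never mentions transversals: it invokes the T\"ornquist--Weiss $\Sigma^1_2$ version of the Komj\'ath--Totik partition theorem (Proposition \ref{theorem: schur}.2): if there is a non-constructible real, then every $\Sigma^1_2$ coloring $g:\RR\to\omega$ admits four distinct monochromatic reals with $x_{00}+x_{11}=x_{01}+x_{10}$. Such a quadruple is rationally dependent (coefficients $1,1,-1,-1$), contradicting (2); and (3) reduces to (2) because algebraically independent sets are rationally independent and the nonzero algebraic reals can be absorbed as countably many extra singletons. That proposition --- whose proof is where the perfect set property actually does its work --- is the key lemma missing from your plan.

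The forward direction is also problematic. Your greedy recursion along $<_L$ --- put $r_\alpha$ into the least class $n$ with $r_\alpha \notin \mathrm{span}_\QQ\{r_\beta : \beta<\alpha,\ n(\beta)=n\}$ --- is not the Erd\H{o}s--Kakutani argument, and its well-definedness is unestablished: nothing rules out a stage $\alpha$ at which $r_\alpha$ lies in the $\QQ$-span of every class built so far (a vector can lie in the spans of infinitely many pairwise disjoint independent sets), in which case no least $n$ exists. The paper instead first builds a $\Delta^1_2$ Hamel basis $H$ by recursion along the $\Delta^1_2$ strong well-ordering (Lemma \ref{lemma: HamelBasis}), writes each nonzero $x$ uniquely as $\sum_i s(i)h_i$ with $h_0 \prec \dots \prec h_{n-1}$ in $H$, and classifies $x$ by the coefficient pattern $s$ together with the positions $t(i)$ of $h_0,\dots,h_{n-2}$ in the $\prec$-initial segment of $h_{n-1}$; each class is rationally independent because distinct members have distinct last basis elements, and uniform $\Sigma^1_2$-definability comes from the initial-segment function $\IS^*$. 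Likewise $(1)\Rightarrow(3)$ goes through a $\Delta^1_2$ transcendence basis (Lemma \ref{lem:tran1}) and an indexing by minimal polynomials and positions, not a greedy recursion on the transcendentals.
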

As side results, the proof we present for the $\Sigma^1_2$ definable versions provides a generalization of both the equivalences by Erd\H{o}s-Kakutani and Zoli, where CH and \vgt{countably many} in the original theorems are replaced by $2^{\aleph_0}\leq \kappa^+$ and \vgt{$\kappa$-many}.

Then we study the $\Sigma^1_2$ version of polynomial avoidance and Schmerl's results \cite{Schmerl}, by introducing a $\Sigma^1_2$ version of $m$-avoidance, for $m \in \omega$. As a corollary we obtain the $\Sigma^1_2$ counterpart of a theorem by Erd\H{o}s and Komj\'{a}th  \cite{ErdosKomjath}: 
\begin{theorem}\label{Thm:2}
$\RR \subseteq L$ if and only if there exists a $\Sigma^1_2$ coloring of the plane in countably many colors with no monochromatic right-angled triangle. 
\end{theorem}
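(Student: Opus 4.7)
The plan is to derive Theorem~\ref{Thm:2} as a direct corollary of the $\Sigma^1_2$ polynomial avoidance framework developed earlier in the paper. The key observation is that \emph{$\{A, B, C\}$ is a right-angled triangle} can be expressed as the disjunction of three polynomial conditions on the six coordinates of the three points — one of $(B - A) \cdot (C - A)$, $(A - B) \cdot (C - B)$, or $(A - C) \cdot (B - C)$ vanishes — so right-triangle avoidance is a special case of the $\Sigma^1_2$ $3$-avoidance machinery introduced above, and Theorem~\ref{Thm:2} should drop out of feeding the right-angle polynomial into that theorem.

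For the forward direction ($\RR \subseteq L \Rightarrow$ coloring), I would invoke the canonical $\Sigma^1_2$ good well-ordering $<_L$ of $\RR^2$ (of order type $\omega_1$, with countable initial segments) and define $c \colon \RR^2 \to \omega$ by recursion along $<_L$: set $c(p)$ to be the least $n \in \omega$ such that no pair $q_1, q_2 <_L p$ with $c(q_1) = c(q_2) = n$ forms a right triangle with $p$. The resulting $c$ is $\Sigma^1_2$ since $<_L$ is and the recursion clause is arithmetical in $<_L$. For the converse, assume $c$ is a $\Sigma^1_2$ coloring without monochromatic right triangles and, toward contradiction, suppose $x_0 \notin L$. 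Applying the $\Sigma^1_2$ Mansfield-type dichotomy from the $m$-avoidance framework to the color classes $A_n = c^{-1}(n)$, one locates a class $A_{n_0}$ containing a non-constructible point and a sufficiently structured perfect-set-like subset realizing the polynomial configuration encoding a right angle, yielding the required monochromatic right triangle.

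The main obstacle I anticipate is well-definedness in the forward direction: at each stage $p$ one must show that the set of colors forbidden for $p$ is a proper subset of $\omega$. Naive pair-counting fails, because a single point $p$ lies on continuum many pairs of perpendicular lines through itself and on continuum many circles through $p$, so $p$ may form a right triangle with continuum many pairs drawn from a sufficiently large preceding set. The resolution is the inductive bookkeeping of the $\Sigma^1_2$ $m$-avoidance theorem established in the preceding section, which controls how the polynomial configurations propagate along the $<_L$-recursion and guarantees that at every stage only finitely many distinct colors are realized among the bad pairs; that is exactly the technical content one invokes as a black box to finish the proof.
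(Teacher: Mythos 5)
Your high-level instinct --- that Theorem \ref{Thm:2} should fall out of the polynomial-avoidance machinery applied to the right-angle polynomial $\tilde{p}$ --- matches the paper's strategy, but both directions of your sketch have genuine gaps where you ask the machinery to supply an argument it does not contain. In the forward direction, the greedy recursion along $<_L$ (colour $p$ with the least $n$ not forbidden by earlier monochromatic pairs) is not what Theorem \ref{theorem: 1avoidance} provides, and the well-definedness problem you correctly identify is not repaired by citing that theorem as a black box: nothing in the $m$-avoidance framework controls a transfinite greedy recursion or guarantees that only finitely many colours get forbidden at each stage, and neither the classical Erd\H{o}s--Komj\'{a}th proof nor Schmerl's argument is a greedy colouring. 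The correct use of the framework is direct: one checks, exactly as in Lemma \ref{lemma: avoidptilde}, that $\tilde{p}$ is $(1,\Sigma^1_2)$-avoidable, and then Theorem \ref{theorem: 1avoidance} hands you a $\Sigma^1_2$ colouring avoiding $\tilde{p}$ outright. That colouring is built not by recursion on $<_L$ but from a $\Delta^1_2$ transcendence basis, the Implicit Function Theorem and $\Sigma^1_2$ uniformization; the well-ordering enters only through the auxiliary function $G$ of Lemma \ref{lemma: G}. So the missing content is precisely the construction inside Theorem \ref{theorem: 1avoidance}, which cannot be replaced by the recursion you describe.

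In the converse direction, ``a colour class containing a perfect-set-like subset realizing the right-angle configuration'' is not a proof and, taken literally, fails as a strategy: a perfect subset of the plane need not contain any right-angled triangle (consider a perfect subset of an open quarter-circle). What actually works, and what the paper does, is Proposition \ref{theorem: CD}.1: if $\RR \nsubseteq L$, then every $\Sigma^1_2$ colouring $f:\RR\times\RR\to\omega$ admits sets $C,D$ with $|C|=|D|=2$ such that $C\times D$ is monochromatic; the four points of $C\times D$ form a genuine axis-parallel rectangle in the plane and hence contain a monochromatic right-angled triangle. The Mansfield--Solovay perfect set argument is indeed hidden inside that proposition, but the configuration you need is a monochromatic combinatorial rectangle, not a perfect subset of a single colour class.
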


\paragraph{Plan of the paper.} The main results are organized in three sections. In Section \ref{section: ErdosKakutani} we prove the first part of Theorem \ref{Thm:1} (i.e. (1) iff (2)) and in Section \ref{section: Zoli} we prove the second part of Theorem \ref{Thm:1}  (i.e. (1) iff (3)). Schmerl's results and Theorem \ref{Thm:2} are shown in Section \ref{section: Schmerl}. Each section starts with a short introduction and a generalization of the classical result, before presenting the definable counterpart.

\section{Preliminaries}\label{section: preliminaries}
A set is $\Sigma^1_2$ if there exists a $\Sigma^1_2$ predicate which defines it, and a function is $\Sigma^1_2$ if its graph is. A set is $\Delta^1_2$ if both it and its complement are $\Sigma^1_2$. Observe that all notions are intended lightface. For details we refer to \cite{Moschovakis}.

\begin{definition}
A $\Delta^1_2$ well-ordering $\prec$ is \emph{strong} if it has length $\omega_1$ and if for any $P \subseteq \RR \times \RR$ which is $\Sigma^1_2$,
\[
	\forall z \prec y P(x,z)
\]
is $\Sigma^1_2$ as well.
\end{definition}
For short we denote $x_{\prec} = \bp{z: z \prec x}$. Given a $\Delta^1_2$ strong well-ordering $\prec$, $P \subseteq \RR^{<\omega} \times \RR$ $\Sigma^1_2$ and $x, y \in \RR$, $\forall s \in (x_{\prec})^{<\omega} P(s,y)$ is $\Sigma^1_2$.
The existence of a $\Delta^1_2$ strong well-ordering of $\RR$ is equivalent to requiring that the initial segment relation $\IS \subseteq \RR \times \RR^{{\leq}\omega}$ defined by
\[
	\IS(x,y) \iff \forall z (z \prec x \iff  \exists n(y(n)= z)) \wedge \forall i, j (y(i) = y(j) \implies i=j),
\]
is $\Delta^1_2$.  We also use the function $\IS^*: \RR \to \RR^{\leq \omega}$ which defines the initial segment of a given real:
\[
	\IS^*(x)= v \iff \IS(x,v) \wedge (\forall w \prec^* v)\neg\IS(x,w).
\]
where $\prec^*$ is the product order in $\RR^{{\leq}\omega}$ induced by $\prec$. If $\RR \subseteq L$ then there exists a $\Delta^1_2$ strong well-ordering of reals which is the usual well-ordering of $\RR$ in $L$ (see e.g. \cite{kanamori}). \medskip 
 
For short, let $S$ be an equivalent form of CH. As shown by T\"{o}rnquist and Weiss, proving that CH implies $S$ can often be directly made into a proof of the effective implication, using the $\Delta^1_2$ strong well-ordering of reals. 

Vice versa, from a proof of \vgt{$S$ implies CH}  we cannot usually extract a proof of the effective implication. To this end, we need some properties of $L$, mainly a corollary of a theorem by Mansfield and Solovay: if a $\Sigma^1_2$ set contains a non-constructible real then it is uncountable. This result does not explicitly appear in this paper, since the use of the perfect set property is hidden in the proof of the $\Sigma^1_2$ version of two partition results, proved by T\"{o}rnquist and Weiss, that we are going to apply. Both are $\Sigma^1_2$ counterparts of partition results by Komj{\'a}th and Totik \cite{KomjathTotik}.

\begin{proposition}[Komj\'{a}th, Totik]\label{prop: KomjathTotik} \label{cor: KomjathTotik} ~ 
\begin{enumerate}
\item  Let $\kappa$ be an infinite cardinal, $|A|= \kappa^+$, $|B| = (\kappa^+)^+$, and $k \in \NN$. If $f: A \times B \to \kappa$, then there exist $A' \subseteq A$, $B' \subseteq B$, $|A'|=|B'|=k$ such that $A' \times B'$ is monochromatic.
\item  If $\neg CH$, then for any coloring $g: \RR \to \omega$ there are distinct $x_{00},x_{01},x_{10},x_{11} \in \RR$ of the same color such that 
$x_{00}+x_{11}=x_{01}+x_{10}.$
\end{enumerate}
\end{proposition}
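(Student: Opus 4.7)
My plan is to prove (1) by two rounds of pigeonhole followed by a greedy counting step, and then to derive (2) from (1) by transporting everything into the additive group of reals using a $\QQ$-linearly independent set.

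For (1), I would first apply pigeonhole column-by-column: for every $b\in B$, since $|A|=\kappa^+>\kappa$, there exist a color $c(b)\in\kappa$ and a set $A_b\subseteq A$ with $|A_b|=\kappa^+$ on which $f(\cdot,b)$ is constant equal to $c(b)$. A second pigeonhole applied to $c\colon B\to\kappa$, using $|B|=(\kappa^+)^+>\kappa$, produces a color $c^{*}$ and $B^{*}\subseteq B$ of size $(\kappa^+)^+$ on which $c\equiv c^{*}$. The problem then reduces to finding a $k$-element $A'\subseteq A$ and a $k$-element $B'\subseteq B^{*}$ with $A'\subseteq\bigcap_{b\in B'}A_b$, since on such a rectangle $f$ is constant equal to $c^{*}$.

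I would build $A'$ one point at a time. Assume distinct $a_0,\dots,a_{i-1}\in A$ and $B^{(i)}\subseteq B^{*}$ with $|B^{(i)}|=(\kappa^+)^+$ have been chosen so that $\{a_0,\dots,a_{i-1}\}\subseteq A_b$ for every $b\in B^{(i)}$ (vacuous at $i=0$). Since removing a finite set leaves $|A_b\setminus\{a_0,\dots,a_{i-1}\}|=\kappa^+$, the counting $\sum_{b\in B^{(i)}}\vp{A_b\setminus\bp{a_0,\dots,a_{i-1}}}=(\kappa^+)^+\cdot\kappa^+=(\kappa^+)^+$ together with $|A|=\kappa^+$ forces some $a_i\in A\setminus\{a_0,\dots,a_{i-1}\}$ to lie in $A_b$ for $(\kappa^+)^+$-many $b\in B^{(i)}$ (otherwise the dual sum over $a$ would be bounded by $\kappa^+\cdot\kappa^+=\kappa^+$). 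Restricting $B^{(i)}$ to those $b$ produces $B^{(i+1)}$. Iterating $k$ times, $A'=\{a_0,\dots,a_{k-1}\}$ together with any $k$-element $B'\subseteq B^{(k)}$ works.

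For (2), I would transfer (1) to $(\RR,+)$ via $\QQ$-linear independence. Under $\neg\mathrm{CH}$ we have $|\RR|\geq\aleph_2$, so one can extract a $\QQ$-linearly independent $H\subseteq\RR$ of cardinality $\aleph_2$ (from any Hamel basis). Splitting $H=A\sqcup B$ with $|A|=\aleph_1$ and $|B|=\aleph_2$, I define $h\colon A\times B\to\omega$ by $h(a,b)=g(a+b)$ and apply (1) with $\kappa=\aleph_0$ and $k=2$, obtaining distinct $a_0,a_1\in A$ and distinct $b_0,b_1\in B$ on which $h$ is constant. Setting $x_{ij}=a_i+b_j$, the equation $x_{00}+x_{11}=x_{01}+x_{10}$ and the common value of the four $g(x_{ij})$ are immediate; pairwise distinctness of the $x_{ij}$ holds because any equality $x_{ij}=x_{i'j'}$ with $(i,j)\neq(i',j')$ would produce a nontrivial $\{-1,0,1\}$-linear relation among the four distinct basis elements $a_0,a_1,b_0,b_1$ of $H$.

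The main obstacle is the greedy iteration in (1): one needs the counting to continue delivering a fresh $a_i$ after $i<k$ removals, which works because the gap between $\kappa^+$ (the size of $A$) and $(\kappa^+)^+$ (the number of available columns) survives finitely many deletions. Everything else is routine cardinal arithmetic and bookkeeping.
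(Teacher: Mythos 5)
Your proof is correct. The paper itself states this proposition without proof, citing Komj\'{a}th and Totik, so there is nothing to compare against for part (1); your double-counting/greedy argument for the polarized partition relation is the standard one and checks out (the key point, that the incidence count $(\kappa^+)^+$ cannot be achieved if every candidate $a$ meets at most $\kappa^+$ columns, is handled correctly). Your derivation of (2) from (1) via a rationally independent set of size $\aleph_2$ split into pieces of sizes $\aleph_1$ and $\aleph_2$, coloring sums $a+b$, is exactly the argument the paper uses for its generalization in Proposition 3.1, specialized to $\kappa=\aleph_0$.
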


\begin{proposition}[T\"{o}rnquist, Weiss]\label{theorem: CD} \label{theorem: schur} ~ 
\begin{enumerate}
\item There is a non-constructible real if and only if for any $x \in \RR\cap L$, for any $\Sigma^1_2(x)$ coloring $f: \RR \times \RR \rightarrow \omega$ and for any $k\in \omega$ there are $C,\ D \subseteq \RR$ such that $|C|=|D|=k$ and $f \res C \times D$ is monochromatic.
\item There is a non-constructible real if and only if for any $\Sigma^1_2$ coloring $g: \RR\rightarrow \omega$ there are four distinct $x_{00},x_{01},x_{10},x_{11}\in \RR$ of the same color such that
$
	x_{00} + x_{11} = x_{01} + x_{10}.
$
\end{enumerate}
\end{proposition}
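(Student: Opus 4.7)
The plan is to prove both biconditionals via the standard dichotomy: if $\RR \subseteq L$, use the $\Delta^1_2$ strong well-ordering of $\RR$ in $L$ to construct by transfinite recursion a $\Sigma^1_2$ coloring avoiding the required configuration; if there is a non-constructible real, invoke the corollary of Mansfield--Solovay asserting that every $\Sigma^1_2(x)$ set with $x \in L$ containing a non-constructible real has a perfect subset, and extract the configuration from such a perfect set. The two items are effective counterparts of Proposition~\ref{prop: KomjathTotik}, with $\RR \subseteq L$ playing the role of CH and \vgt{there is a non-constructible real} that of $\neg$CH.

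For the ``if'' directions, fix a $\Delta^1_2$ strong well-ordering $\prec$ of length $\omega_1$ and a real $x_0 \in L$ coding it. For part~(2) define $g(x)$ recursively as the least $n \in \omega$ such that no three distinct $y_1, y_2, y_3 \prec x$ with $g(y_i) = n$ can be arranged, together with $x$, into a quadruple satisfying $x_{00}+x_{11}=x_{01}+x_{10}$; since $\IS^*(x)$ codes a countable set, only countably many colors are ever ruled out, so $n$ exists. Part~(1) is analogous on $\RR \times \RR$ ordered lexicographically by $\max_{\prec}$: at each new pair pick the least color that does not complete a monochromatic $k \times k$ rectangle together with earlier pairs. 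In both cases the forbidding clause is a universal quantifier over elements of an initial segment of a $\Sigma^1_2$ condition, so by the strongness of $\prec$ the whole coloring is $\Sigma^1_2(x_0)$.

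For the ``only if'' direction of (2), given a non-constructible real $z$ and a $\Sigma^1_2$ coloring $g$, the class $g^{-1}(g(z))$ is $\Sigma^1_2$ and contains $z$, hence by Mansfield--Solovay it has a perfect subset $P$; inside $P$ one finds four distinct reals realising $x_{00}+x_{11}=x_{01}+x_{10}$, using that uncountable analytic sets are rich enough to contain such additive quadruples. For the ``only if'' direction of (1), iterate Mansfield--Solovay: starting with $n = f(z,z)$, each auxiliary $\Sigma^1_2(x)$ slice obtained by fixing a row (respectively a column) at color $n$ again contains a non-constructible real and hence a perfect subset, so one can successively peel off perfect families of rows and columns of a common color; from there the classical combinatorial argument in Proposition~\ref{prop: KomjathTotik}(1) produces the desired $k \times k$ monochromatic rectangle.

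The two delicate points are: (a) verifying that the recursive colorings in the $\RR \subseteq L$ direction are genuinely $\Sigma^1_2(x_0)$, which hinges precisely on the fact that ``$\forall z \prec y$'' preserves $\Sigma^1_2$ for strong well-orderings; and (b) converting iterated perfect sets of monochromatic rows and columns into an honest $k \times k$ rectangle in the converse of~(1), where the rectangle extraction of Komjáth--Totik must be ported to the effective setting. Part~(2) is by comparison a single application of Mansfield--Solovay followed by an elementary observation about difference sets of uncountable analytic sets.
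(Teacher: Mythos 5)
The paper never proves this proposition: it is quoted from T\"{o}rnquist and Weiss and used as a black box, so there is no in-paper argument to compare yours with, and I can only judge your sketch on its own terms. It has two genuine gaps. The first is in the constructions of the avoiding colorings under $\RR\subseteq L$. The justification \vgt{only countably many colors are ever ruled out, so $n$ exists} is a non sequitur: the palette is $\omega$, which is itself countable, so you would have to show that the set of forbidden colors is a \emph{proper} subset of $\omega$, and nothing in your construction does. At a given $x$, every pair $y_1,y_2\prec x$ of like-colored predecessors determines a third point $y_3=y_1+y_2-x$ that may also be a predecessor, so infinitely many --- conceivably all --- colors can be excluded and the greedy recursion may simply have no legal value; the same objection applies to the greedy rectangle coloring in~(1). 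The constructions that actually effectivize are explicit Sierpi\'nski-type ones, not greedy: for~(1) set $f(x,y)=\ap{0,F_y(x)}$ if $x\prec y$ and $\ap{1,F_x(y)}$ otherwise, where $F_x$ is the $\Sigma^1_2$ injection of $x_\prec$ into $\omega$ from Lemma~\ref{lemma: G}; injectivity of $F$ at the $\prec$-largest coordinate rules out every monochromatic $2\times2$ rectangle. For~(2), a monochromatic quadruple with $x_{00}+x_{11}-x_{01}-x_{10}=0$ is a rational dependence among four distinct points of one color class, so the uniformly $\Sigma^1_2$ partition of $\RR\setminus\bp{0}$ into rationally independent sets (the forward direction of Theorem~\ref{theorem: EKdefinable}) already supplies the required coloring.

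The second gap is in the forward direction of~(2). It is false that every perfect (or uncountable analytic) set contains four distinct points with $x_{00}+x_{11}=x_{01}+x_{10}$: by Mycielski there is a perfect, lightface definable, rationally independent set $P$, and any such quadruple inside $P$ would be the nontrivial dependence $x_{00}+x_{11}-x_{01}-x_{10}=0$. Taking $g$ to be the characteristic function of such a $P$ --- which contains a non-constructible real whenever one exists, since a perfect set with code in $L$ cannot be contained in $L\cap\RR$ unless $\RR\subseteq L$ --- your argument selects precisely the color class that has no quadruple. The correct route is the effective analogue of the paper's own Proposition~\ref{prop: KomjathTotikK}: fix perfect sets $A,B$ with $A\cup B$ rationally independent, note that $(a,b)\mapsto g(a+b)$ is a $\Sigma^1_2$ coloring of $A\times B$ (identified with $\RR\times\RR$ via a $\Delta^1_1$ bijection), apply item~(1) with $k=2$, and read off $x_{ij}=a_i+b_j$, which are distinct and satisfy the equation by independence. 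Finally, the forward direction of~(1), which is where essentially all of the work in T\"{o}rnquist--Weiss lies, is dispatched in your sketch by \vgt{iterate Mansfield--Solovay} followed by \vgt{the classical combinatorial argument}; as it stands that is an appeal to the result rather than a proof of it.
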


We mainly work in $\RR$, but sometimes we also work in different recursively presented Polish spaces, as $\RR^{\leq \omega}$. This is not a problem in our setting, since between any two recursively presented Polish spaces there is a $\Delta^1_1$ bijection. 

\section{Rationally independent sets} \label{section: ErdosKakutani}

In \cite{ErdosKakutani} Erd\H{o}s and Kakutani proved there is a close connection between CH and the existence of some special rationally independent subsets of reals. We prove a generalization of Erd\H{o}s and Kakutani's equivalence. Recall that a set $X \subseteq \RR$ is rationally independent if for any $n \in \NN$,  $x_0, \dots, x_{n-1} \in X$ and for any $q_0, \dots, q_{n-1} \in \QQ\setminus\bp{0}$ we have:
\[
	\sum_{i=0}^{n-1} q_ix_i \neq 0.
\]

First of all, following Komj{\'a}th and Totik, we get a straightforward generalization of Proposition \ref{cor: KomjathTotik}.2. 

\begin{proposition}\label{prop: KomjathTotikK}
If $2^{\aleph_0} \geq (\kappa^+)^+$, then for any coloring $g: \RR \to \kappa$ there are distinct $x_{00},x_{01},x_{10},x_{11} \in \RR$ of the same color such that  $x_{00}+x_{11}=x_{01}+x_{10}.$
\end{proposition}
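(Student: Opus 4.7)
The plan is to generalize the $\kappa=\omega$ argument (Proposition \ref{prop: KomjathTotik}.2) by invoking Proposition \ref{prop: KomjathTotik}.1 as the partition tool. Concretely, I would build two sets $A, B \subseteq \RR$ of sizes $\kappa^+$ and $(\kappa^+)^+$ whose difference sets meet only at $0$, reduce $g$ to a coloring of $A \times B$, and then extract a $2 \times 2$ monochromatic rectangle whose corners will be the desired $x_{ij}$.

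First I would pick any $A \subseteq \RR$ with $|A| = \kappa^+$; since $\kappa^+$ is infinite, $|A - A| \le \kappa^+$. By transfinite recursion of length $(\kappa^+)^+$, I would then build $B = \bp{b_\alpha : \alpha < (\kappa^+)^+}$ by choosing, at stage $\alpha$, a real $b_\alpha$ outside $B_\alpha \cup ((A-A) + B_\alpha)$, where $B_\alpha = \bp{b_\beta : \beta < \alpha}$. The forbidden set has cardinality at most $\kappa^+ \cdot \kappa^+ = \kappa^+ < (\kappa^+)^+ \le 2^{\aleph_0}$, so a suitable $b_\alpha$ always exists. By construction $(A-A) \cap (B-B) = \bp{0}$.

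Next I would define $f : A \times B \to \kappa$ by $f(a,b) = g(a+b)$ and apply Proposition \ref{prop: KomjathTotik}.1 with $k = 2$ to get $A' = \bp{a_0, a_1} \subseteq A$ and $B' = \bp{b_0, b_1} \subseteq B$ on which $f$ is constant. Setting $x_{ij} = a_i + b_j$, all four values $g(x_{ij})$ coincide and $x_{00} + x_{11} = x_{01} + x_{10}$ is automatic. The four $x_{ij}$ are pairwise distinct: any collision would force either $a_0 = a_1$ (excluded), $b_0 = b_1$ (excluded), or $a_0 - a_1 = \pm(b_0 - b_1)$, which would place a nonzero element of $A-A$ into $B-B$, contradicting the construction of $B$.

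I do not anticipate a substantive obstacle. The only delicate point is the distinctness of the four $x_{ij}$, which is exactly what forces the difference-disjointness trick in the choice of $A$ and $B$; the cardinal inequality $2^{\aleph_0} \ge (\kappa^+)^+$ is precisely what the transfinite recursion consumes, and Proposition \ref{prop: KomjathTotik}.1 supplies the rectangle once the setup is in place.
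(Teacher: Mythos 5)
Your proof is correct and follows essentially the same route as the paper: both reduce the problem to Proposition \ref{prop: KomjathTotik}.1 with $k=2$ via the coloring $f(a,b)=g(a+b)$ on a pair of sets of sizes $\kappa^+$ and $(\kappa^+)^+$ built by transfinite recursion using $2^{\aleph_0}\geq(\kappa^+)^+$. The only difference is the independence condition used to guarantee distinctness of the four sums: the paper takes the combined set $\bp{a_\alpha}\cup\bp{b_\beta}$ to be rationally independent, whereas you use the weaker (but sufficient) requirement that $(A-A)\cap(B-B)=\bp{0}$.
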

\begin{proof}
Assume that $2^{\aleph_0} \geq (\kappa^+)^+$ and consider any $g:\RR\to \kappa$. Take an injection $i:(\kappa^+)^+ \hookrightarrow \RR$ and define $i':(\kappa^+)^+ \hookrightarrow \RR$ such that $\mbox{ran}(i')$ is rationally independent. Put $i'(\alpha) = i(\beta)$ for $\beta= \mu \gamma \in (\kappa^+)^+ (\forall s \in (\QQ\setminus \bp{0})^{<\omega} \forall t \in \gamma^{<\omega} ( \dom(s) = \dom(t) \implies \sum_{j=0}^{dom(s)} s(j)i(t(j)) \neq i(\gamma)))$.
Let 
\[
	\bp{a_\alpha : \alpha < \kappa^+}\cup \bp{b_\beta : \beta < (\kappa^+)^+}
\]
be a rationally independent set, and define the following coloring:
\[
\begin{array}{rrcl}
	f: & \kappa^+ \times (\kappa^+)^+  & \longrightarrow & \kappa \\
	   & (\alpha, \beta) &\longmapsto & g(a_\alpha + b_\beta)
\end{array}
\]
Thanks to Proposition \ref{prop: KomjathTotik}.1 there exist $\alpha_0, \alpha_1\in \kappa^+$ and $\beta_0, \beta_1 \in (\kappa^+)^+ $ such that $\bp{\alpha_0, \alpha_1} \times \bp{\beta_0, \beta_1}$ is monochromatic. Define $x_{ij}= \alpha_i + \beta_j$ for any $i, j \in 2$. They are distinct and they satisfy $x_{00}+x_{11}=x_{01}+x_{10}$.
\end{proof}
Recall that $H \subseteq \RR$ is a Hamel basis if both $H$ is rationally independent and $H$ is a basis of $\RR$ over $\QQ$.

\begin{theorem}\label{theorem: ErdosKakutaniK}
We have $2^{\aleph_0} \leq \kappa^+$ if and only if $\RR \setminus \bp{0}$ can be covered by $\kappa$-many rationally independent sets.
\end{theorem}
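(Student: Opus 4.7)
The plan is to prove the two implications separately. For the converse, suppose $\RR \setminus \{0\}$ is covered by rationally independent sets $\{B_\eta\}_{\eta < \kappa}$. Arguing by contraposition, assume $2^{\aleph_0} \geq (\kappa^+)^+$. Define a coloring $g: \RR \to \kappa$ by selecting, for each nonzero $x$, some index $\eta < \kappa$ with $x \in B_\eta$ (and setting $g(0)$ arbitrarily). Proposition \ref{prop: KomjathTotikK} then yields four distinct $x_{00}, x_{01}, x_{10}, x_{11}$ of the same color $\eta$ with $x_{00} + x_{11} = x_{01} + x_{10}$. All four lie in $B_\eta$, but $x_{00} - x_{01} - x_{10} + x_{11} = 0$ is a nontrivial rational dependency on four distinct elements, contradicting rational independence of $B_\eta$.

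For the forward direction, I would fix a Hamel basis $H$ of $\RR$ over $\QQ$; since $|H| = 2^{\aleph_0} \leq \kappa^+$, enumerate $H = \{h_\beta : \beta < \lambda\}$ for some $\lambda \leq \kappa^+$. Every nonzero $x \in \RR$ has a unique Hamel expansion $x = q_1 h_{\alpha_1} + \dots + q_m h_{\alpha_m}$ with $\alpha_1 < \dots < \alpha_m$ and $q_i \in \QQ \setminus \{0\}$; set $\sigma(x) = \alpha_m$. Let $L_\alpha = \{x : \sigma(x) = \alpha\}$. Because $L_\alpha \subseteq \mathrm{span}_{\QQ}\{h_\beta : \beta \leq \alpha\}$, which has cardinality at most $|\alpha| + \aleph_0 \leq \kappa$ for $\alpha < \lambda \leq \kappa^+$, each $L_\alpha$ has cardinality at most $\kappa$. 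Partitioning directly by $\sigma$ would use $\lambda$-many pieces, which is potentially too many.

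The trick is to transpose the double indexing. Enumerate each $L_\alpha = \{y^\alpha_\eta : \eta < \kappa\}$ (repeating entries if $|L_\alpha| < \kappa$), and define $A_\eta = \{y^\alpha_\eta : \alpha < \lambda\}$ for $\eta < \kappa$. Then $\RR \setminus \{0\} = \bigcup_{\eta < \kappa} A_\eta$. To verify rational independence of $A_\eta$, suppose $\sum_{i=1}^{k} r_i y^{\alpha_i}_\eta = 0$ with $\alpha_1 < \dots < \alpha_k$ and $r_i \in \QQ$. The basis element $h_{\alpha_k}$ appears with nonzero coefficient in the Hamel expansion of $y^{\alpha_k}_\eta$, but cannot appear in any $y^{\alpha_i}_\eta$ for $i < k$ since their $\sigma$-values are strictly below $\alpha_k$. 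Reading off the coefficient of $h_{\alpha_k}$ in the sum forces $r_k = 0$, and descending induction yields $r_i = 0$ for all $i$.

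The only substantive point is the forward direction, as the converse is a direct contrapositive application of Proposition \ref{prop: KomjathTotikK}. The key combinatorial insight is the level-set/transposition idea: fibring by $\sigma$ is too fine, but reindexing inside each fibre folds the $\lambda$-many level sets into $\kappa$-many classes while preserving rational independence, because within each $A_\eta$ the $\sigma$-values are automatically distinct, so a top basis element witnessing the largest $\sigma$-value in any putative dependency survives and pins down its rational coefficient to zero.
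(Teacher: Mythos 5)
Your proof is correct and follows essentially the same strategy as the paper's: the converse is the identical contrapositive application of Proposition \ref{prop: KomjathTotikK}, and the forward direction uses the same Hamel-basis argument, keying each real to the top basis element of its expansion, noting each fibre has size at most $\kappa$, and transposing the resulting $\kappa^+\times\kappa$ array so that distinct elements of each piece have distinct top basis elements. The only (harmless) difference is that you fibre by the top basis element directly, whereas the paper first stratifies by the coefficient sequence $s$ --- a refinement it exploits later for the definable version but which is not needed for the classical statement.
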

\begin{proof} 
\vgt{$\Rightarrow$}. Assume that $2^{\aleph_0} \leq \kappa^+$, and let $H$ be a Hamel basis for $\RR$. Take an injection $f: \RR \hookrightarrow \kappa^+$ and define the order $\vartriangleleft$ of length ${\leq}\kappa^+$ of $\RR$ by $x \vartriangleleft y$ if and only if $f(x) <f(y)$. For any natural number $n >0$ and for any $s \in (\QQ \setminus \bp{0})^{n}$, define
\begin{equation}\label{form:hamel1a}
	s\cdot H^n= \bp{x: \exists h_0 \vartriangleleft  \dots \vartriangleleft  h_{n-1} \in H \cp{ x= \sum_{i=0}^{n-1} s(i) h_i}}.
\end{equation}

First notice that as $H$ is a Hamel basis, $\RR \setminus \bp{0}$ is covered by all sets $s\cdot H^n$ for $s \in (\QQ \setminus \bp{0})^n$. So it suffices to show the result for all sets $s \cdot H^n$. From now on fix $n \in \NN$ and $s \in (\QQ \setminus \bp{0})^{n}$. Note if $n=1$ as $H$ is rationally independent the result is trivial, so assume that $n>1$.

Given $x \in s\cdot H^n$, define $\last(x)$ as the greatest element of $H$ which appears in (\ref{form:hamel1a}) (i.e. $h_{n-1}$). For any $h \in H$ define
\[
	s \cdot (H^{n-1}h) = \bp{ x \in s \cdot H^n : \last(x)=h}.
\]
Given $h \in H$ let $\gamma_h = |s \cdot (H^{n-1}h)|$. Observe that $\gamma_h \leq \kappa$ since $|H|=\kappa^+$. Therefore for any $h \in H$ we can fix an enumeration:
\[
	s \cdot (H^{n-1}h)=\bp{x^h_\alpha: \alpha < \gamma_h}.
\]
Finally, for any $\alpha < \kappa$ let $S_{s,\alpha}$ be the set of $\alpha$-th elements of any $s \cdot (H^{n-1}h)$ for $h \in H$; i.e.
\[
	S_{s,\alpha}=\bp{x^h_\alpha : h \in H  \wedge \alpha < \gamma_h}.
\]
Therefore we have
\[
	\RR \setminus \bp{0}=\bigcup \bp{S_{s,\alpha}: s \in ( \QQ \setminus \bp{0})^{<\omega} \wedge \alpha <\kappa}.
\]
We claim that for any $\alpha < \kappa$, $S_{s,\alpha}$ is rationally independent. Indeed, assume by contradiction that
\begin{equation}\label{form:hamel2a}
	   		\sum_{i=0}^{k} p_i x_i=0,
\end{equation}
where for any $i \in k+1$:
\begin{itemize}
	   \item  $x_i \in S_{s,\alpha}$;
	   \item  for any $j \in k+1$, $x_i \neq x_j$;
	   \item  $p_i$ is a not null integer.
\end{itemize}  
By construction, for any two distinct elements $x_1, x_2 \in S_{s,\alpha}$, $\last(x_1) \neq \last(x_2)$. Then there would exist an integer $i_0 \in k+1$ for which $\last(x_{i_0}) > \last(x_i) $ for any $i \in k+1 \setminus\bp{i_0}$. Hence in the expansion (\ref{form:hamel1a}) of all $x_i$, $\last(x_{i_0})$ would appear only once, which is impossible because of (\ref{form:hamel2a}) and $H$ rationally independent. 

\vgt{$\Leftarrow$.} Let $\RR\setminus \bp{0} = \bigcup \bp{S_\alpha : \alpha \in \kappa}$, where each $S_\alpha$ is a rationally independent set.  Assume by contradiction that $2^{\aleph_0} > \kappa^+$ and define $g : \RR  \to \kappa$ such that $g(0) = 0$ and for any $x \in \RR \setminus \bp{0}$
\[
	g(x)= \alpha + 1 \iff  x \in S_\alpha \wedge \forall \beta < \alpha (x \notin S_\beta). 
\]
Applying Proposition \ref{prop: KomjathTotikK} we get $x_{00},\ x_{10},\ x_{01},\ x_{11} \in \RR \setminus\bp{0} $ such that $x_{00},\ x_{10},\ x_{01},\ x_{11} \in S_\alpha$ for some $\alpha \in \kappa$ which are rationally dependent. Contradiction.
\end{proof}

The proof of the first implication follows the one by Erd\H{o}s and Kakutani's result, while the argument for the vice versa, as far as we know, is new. The original proof uses a tree argument, which cannot be easily adapted to the $\Sigma^1_2$ version. 

\begin{remark}\label{rmk: disjointHamel}
Note that in the proof of Theorem \ref{theorem: ErdosKakutaniK} the subsets $S_{s,\alpha}$, for $s \in (\QQ\setminus\bp{0})^{<\omega}$ and $\alpha \in \kappa$, are disjoint. 
\end{remark} 

\subsection{Definable counterpart}
We prove that $\RR \subseteq L$ holds if and only if $\RR \setminus \bp{0}$ can be decomposed in countably many rationally independent subsets of reals which are uniformly definable by a $\Sigma^1_2$ predicate. The proof follows very closely the one of Theorem \ref{theorem: ErdosKakutaniK}: we need only to check that if there is a $\Delta^1_2$-strong well-ordering of $\RR$ then the sets provided by Erd\H{o}s and Kakutani's argument are uniformly $\Sigma^1_2$. For the opposite implication we apply Proposition \ref{theorem: schur}.2, the $\Sigma^1_2$ counterpart of Proposition \ref{prop: KomjathTotikK}.

As a first step we considered a Hamel basis for $\RR$. In \cite{Miller} Miller proved that if $V=L$ then there is a $\Pi^1_1$ Hamel basis. For our goal it is sufficient to provide a $\Delta^1_2$ one, under the condition $\RR \subseteq L$. The proof is straightforward and it should be well-known. However, since we have not found any reference for this proof, we show the argument.

\begin{lemma}\label{lemma: HamelBasis}
If $\RR \subseteq L$ then there exists a $\Delta^1_2$ Hamel basis for $\RR$. 
\end{lemma}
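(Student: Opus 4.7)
The plan is to use the canonical $\Delta^1_2$ strong well-ordering $\prec$ of $\RR$ that exists under $\RR \subseteq L$, and define the Hamel basis by the standard transfinite-recursive criterion: include $x \neq 0$ exactly when $x$ fails to be a rational linear combination of strictly $\prec$-earlier reals. Concretely I would set
\[
	H = \bp{x \in \RR \setminus \bp{0} : \forall s \in (x_\prec)^{<\omega}\, \forall q \in (\QQ\setminus\bp{0})^{<\omega} \cp{\dom(s)=\dom(q) \implies x \neq \textstyle\sum_{i<\dom(s)} q(i)s(i)}}.
\]

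First I would verify that $H$ is a Hamel basis. Rational independence follows from the usual \emph{largest-witness} argument: any nontrivial relation $\sum q_i x_i = 0$ with $x_i \in H$ and $q_i \in \QQ\setminus\bp{0}$ would let us solve for the $\prec$-largest $x_{i_0}$ as a rational combination of strictly $\prec$-earlier reals, contradicting $x_{i_0} \in H$. Spanning is established by $\prec$-induction: if $x \neq 0$ is not in $H$, then the defining clause fails, so $x$ is already a rational combination of finitely many $\prec$-earlier reals, each of which lies in the $\QQ$-span of $H$ by the inductive hypothesis.

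Second, the complexity calculation. The matrix of the defining formula for $H$ is arithmetic in $s$, $q$, $x$, so by the strong well-ordering closure property recalled in Section \ref{section: preliminaries} (namely, $\forall s \in (x_\prec)^{<\omega} P(s,x)$ is $\Sigma^1_2$ whenever $P$ is $\Sigma^1_2$), the set $H$ is $\Sigma^1_2$. For the complement, $x \notin H$ iff $x = 0$ or there exist $s \in \RR^{<\omega}$ and $q \in (\QQ\setminus\bp{0})^{<\omega}$ with $\dom(s) = \dom(q)$, $s \in (x_\prec)^{<\omega}$, and $x = \sum q(i)s(i)$; the side condition $s \in (x_\prec)^{<\omega}$ is $\Delta^1_2$ because $\prec$ is, so the complement is manifestly $\Sigma^1_2$. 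Hence $H$ is $\Delta^1_2$.

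The construction has no real obstacle once the well-ordering is in hand; the only point that requires care is the complexity bookkeeping, specifically ensuring that the bounded universal quantifier over $(x_\prec)^{<\omega}$ is absorbed via the strong-well-ordering closure (which is precisely the property the preliminaries were set up to supply), and that the existential dual for the complement is expressed with $s \in \RR^{<\omega}$ together with a $\Delta^1_2$ side constraint rather than as a quantifier bounded by $\prec$.
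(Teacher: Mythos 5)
Your proposal is correct and follows essentially the same route as the paper: the same definition of $H$ via the $\Delta^1_2$ strong well-ordering (exclude $x$ iff it is a nontrivial rational combination of $\prec$-earlier reals), the same $\prec$-induction for spanning, and the standard largest-witness argument for independence. The only difference is that you spell out the complexity bookkeeping for both $H$ and its complement, which the paper leaves implicit.
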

\begin{proof}
Let $\prec$ be a $\Delta^1_2$-strong well-ordering of $\RR$. Define
\[
	h \in H \iff \forall s \in (h_\prec)^{<\omega} \ \forall t \in (\QQ \setminus \bp{0})^{<\omega}( \dom(s)=\dom(t) \implies h \neq \sum_{i=0}^{\dom(s)} t(i)s(i)).
\]

By definition $H$ is rationally independent. 
We prove that $H$ generates $\RR$. Take $x \in \RR$ and prove that there exist $n \in \NN$, $h_0, \dots, h_{n-1} \in H$ and $q_0, \dots, q_{n-1} \in \QQ$ such that $x = \sum_{i=0}^{n-1} q_ih_i$. Proceed by induction on $\prec$. If $x$ has no $\prec$-predecessors then it belongs to $H$. Assume that the assertion holds for any $y$ such that  $y\prec x$. If $x \in H$ we are done. Otherwise there exist some $n\in \NN$, $x_0 \prec \dots \prec x_{n-1} \prec x$ and $q_0, \dots, q_{n-1} \in \QQ\setminus\bp{0}$ such that $x = \sum_{i=0}^{n-1} q_ix_i$. As all $x_i$ are generated by $H$, so is $x$.
\end{proof}

\begin{theorem}\label{theorem: EKdefinable}
	$\RR \subseteq L$ if and only if  there is a countable decomposition of $\RR \setminus \bp{0}$ into $\Sigma^1_2$-uniformly definable subsets consisting only of rationally independent numbers.
\end{theorem}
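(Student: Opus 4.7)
The plan is to imitate the proof of Theorem \ref{theorem: ErdosKakutaniK} with $\kappa = \aleph_0$, tracking definability at every step, and to close the converse via Proposition \ref{theorem: schur}.2.

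\textbf{Forward direction.} Assume $\RR \subseteq L$. I would fix a $\Delta^1_2$ strong well-ordering $\prec$ of $\RR$ (of length $\omega_1$) together with the $\Delta^1_2$ Hamel basis $H$ provided by Lemma \ref{lemma: HamelBasis}. For each $n \geq 1$ and $s \in (\QQ \setminus \bp{0})^n$ I would copy the construction (\ref{form:hamel1a}), reading $\prec$ in place of $\vartriangleleft$, and then define $\last(x)$ and $s \cdot (H^{n-1}h)$ exactly as before. Because $\prec$ has length $\omega_1$, every initial segment $h_\prec$ is countable, and hence so is $s \cdot (H^{n-1}h)$, which may therefore be enumerated by some $\gamma_h \leq \omega$. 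The remaining work is to produce this enumeration uniformly by a $\Sigma^1_2$ predicate, for which I would exploit $\IS^*$: since $\IS^*(h)$ is a $\Delta^1_2$ listing of $h_\prec$ as a real sequence, I would fix a recursive bijection $\pi : \omega^{n-1} \to \omega$ and, for each $\alpha < \omega$, set $x^h_\alpha = \sum_{j < n-1} s(j)\,\IS^*(h)(i_j) + s(n-1)\,h$, where $(i_0 < \cdots < i_{n-2})$ is the $\alpha$-th tuple (enumerated through $\pi$) for which $\IS^*(h)(i_j) \in H$ for every $j$. Then $S_{s, \alpha} := \bp{x^h_\alpha : h \in H,\ \alpha < \gamma_h}$ is $\Sigma^1_2$ uniformly in $(s, \alpha)$, since $H$ and $\IS^*$ are $\Delta^1_2$ and the search for the $\alpha$-th admissible tuple is arithmetical. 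Rational independence of each $S_{s, \alpha}$ follows verbatim from the $\last$-argument of Theorem \ref{theorem: ErdosKakutaniK}, disjointness from Remark \ref{rmk: disjointHamel}, and the fact that the countable family covers $\RR \setminus \bp{0}$ from $H$ being a Hamel basis.

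\textbf{Backward direction.} Conversely, assume $\RR \setminus \bp{0} = \bigsqcup_{n \in \omega} S_n$ is a partition into $\Sigma^1_2$-uniformly definable rationally independent sets, and suppose for contradiction that some real is non-constructible. Define $g : \RR \to \omega$ by $g(0) = 0$ and $g(x) = n + 1$ for the unique $n$ with $x \in S_n$ when $x \neq 0$; disjointness of the $S_n$ makes $g$ a uniformly $\Sigma^1_2$ coloring. By Proposition \ref{theorem: schur}.2 there are four distinct $x_{00}, x_{01}, x_{10}, x_{11}$ with constant $g$-value satisfying $x_{00} + x_{11} = x_{01} + x_{10}$. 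Since colour $0$ is attained only at $0$, the common colour must be some $n + 1$, so all four reals lie in $S_n$, and then $x_{00} - x_{01} - x_{10} + x_{11} = 0$ is a nontrivial rational relation in $S_n$, contradicting its rational independence.

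The main obstacle is the bookkeeping in the forward direction: one must check that the map $(h, \alpha) \mapsto x^h_\alpha$ is genuinely captured by a $\Sigma^1_2$ predicate uniformly in all parameters. The strongness of $\prec$, the $\Delta^1_2$-ness of $\IS^*$, and the $\Delta^1_2$-ness of $H$ are precisely the ingredients that replace the set-theoretic choice of enumeration with a descriptive-set-theoretic one.
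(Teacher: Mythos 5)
Your proposal is correct and follows essentially the same route as the paper: a $\Delta^1_2$ Hamel basis from Lemma \ref{lemma: HamelBasis}, a decomposition of each $s\cdot H^n$ into countably many uniformly $\Sigma^1_2$ pieces on which $\last$ is injective (so the $\last$-argument of Theorem \ref{theorem: ErdosKakutaniK} gives rational independence), and Proposition \ref{theorem: schur}.2 for the converse. The only deviation is cosmetic: you enumerate the admissible position-tuples in $\IS^*(h)$ via a pairing function and index the pieces by $\alpha<\omega$, whereas the paper indexes them directly by the increasing tuple $t\in\NN^{n-1}$ of positions; both yield the same uniformly $\Sigma^1_2$ family.
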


\begin{proof}
		\vgt{$\Rightarrow$}. Assume that $\RR \subseteq L$, and let $H$ be a $\Delta^1_2$ Hamel basis for $\RR$, which exists thanks to Lemma \ref{lemma: HamelBasis}. For any natural number $n$, for any sequence $s \in (\QQ\setminus\bp{0})^{n}$ and for any $h \in H$ define $s\cdot H^n$ and $s\cdot(H^{n-1}h)$ as in the proof of Theorem \ref{theorem: ErdosKakutaniK}. Then
		\[
					\RR \setminus \{0\} = \bigsqcup \bp{s \cdot H^{n-1}: n \in \omega, s \in (\QQ\setminus\bp{0})^{n}}.
		\]
		As observed in Remark \ref{rmk: disjointHamel} this is a disjoint union. Fix a natural number $n$ and a sequence $s\in (\QQ\setminus\bp{0})^{n}$. We want to define countably many disjoint subsets of $s \cdot H^n$ such that each one contains at most one element of $s \cdot (H^{n-1}h)$ for any $h \in H$. To this end, given an increasing finite sequence of natural numbers  $t \in \NN^{n-1}$, define $S_{s,t}$ to be the subset of $s\cdot H^n$ which consists of elements of the form $x = \sum_{i=0}^{n-1} s(i)h_i$ for some $h_0 \prec \dots \prec h_{n-1} \in H$ such that for any $i \in n-1$, $h_i$ is the $t(i)$-th predecessor of $h_{n-1}$. Therefore
		 \begin{multline*}
			   S_{s,t}=\{x : \exists h \in H \exists v \in \RR^{\leq \omega}(\IS^*(h) = v \wedge  \forall i \in n-1 (v(t(i)) \in H)\\
			   \wedge x = \sum_{i=0}^{n-2} s(i) v(t(i)) + s(n-1)h )\}.
	     \end{multline*}
		By construction $\last(x_1)\neq \last(x_2)$ for any $x_1, \ x_2 \in S_{s,t}$ such that $x_1 \neq x_2$.
		Moreover we have
	    $
			   s\cdot H^n = \bigsqcup \bp{ S_{s,t}: {t \in \NN^{n-1}}}.
	    $
	    The sets $S_{s,t}$, where $s \in \QQ^{<\omega}$ and $t \in \NN^{<\omega}$, are uniformly definable by the following $\Sigma^1_2$  formula.
		\begin{multline*}
		\psi(x,n,s, t) \iff s \in \QQ^{<\omega} \wedge t \in \NN^{<\omega} \wedge \dom(s)= n \wedge \dom(t)=n-1 \\
		\wedge \forall i \in n-2 (t(i) < t(i+1)) \wedge \exists h \exists v \in \RR^{\leq \omega}\Big[ h \in H \wedge \IS^*(h)=v\\
		\wedge \forall i \in n-1 (v(t(i)) \in H) \wedge x= \sum_{i=0}^{n-2} s(i) v(t(i))+s(n-1)h\Big].
		\end{multline*}
		
	Notice to conclude that rationally independence of the set $S_{s,t}$ holds with the same argument of Theorem \ref{theorem: ErdosKakutaniK}.

		\vgt{$\Leftarrow$}. Let $\RR \setminus \bp{0}=\bigsqcup\bp{ S_i: {i \in \omega}}$, where $S_i$ are uniformly $\Sigma^1_2$ definable rationally independent sets.
	   Let define $g: \RR \rightarrow \omega$ such that $g(0)=0$ and for any $x \in \RR \setminus \bp{0}$
	   \[
	   		g(x)=i+1 \iff x \in S_i.
	   \]
	   Since by hypothesis the $S_i$ are uniformly definable by a $\Sigma^1_2$ formula, $g$ is $\Sigma^1_2$. Suppose by contradiction that $\RR \nsubseteq L$, then by applying Proposition \ref{theorem: schur}.2 there exist $x_{00}, x_{01}, x_{10}, x_{11} \in \RR \setminus \bp{0}$ such that $x_{00}, x_{01}, x_{10}, x_{11} \in S_i$ for some $i \in \omega$ and
	   $x_{00} + x_{11} =x_{01} +x_{10}.$
	   So there are four distinct elements of $S_i$ which are rationally dependent. Contradiction.
\end{proof}

\section{Algebraically independent sets} \label{section: Zoli}

Zoli in \cite{Zoli} proved a connection between CH and the existence of a decomposition of  transcendental reals in algebraically independent sets. We provide a generalization of Zoli's result.

Given two fields $K_1 \subseteq K_2$, we say that $x \in K_2$ is algebraic over $K_1$ if there exists a polynomial $p$ in $K_1[X]$ (not null) such that $p(x)=0$. If $x \in K_2$ is not algebraic over $K_1$, then it is called transcendental over $K_1$. We denote by $\alg_{K_2}{K_1}$ the subfield of $K_2$ consisting of algebraic elements over $K_1$.
Given $x \in K_2$, $K_1(x)$ is the field extension generated by $x$.
$S \subseteq K_2$ is algebraically dependent over $K_1$ if there exist $x_0, \dots, x_n \in S$ such that $x_{n}$ is algebraic over $K_1(x_0,\dots, x_{n-1})$.  A transcendence basis $T$ is a subset of reals which is algebraically independent over $\QQ$ and maximal.
\begin{lemma}[Folklore]\label{lemma: tran2}
Let $K_1 \subseteq K_2$ be a field extension.
\begin{itemize}
\item  Let $S \subseteq K_2$. If $x \in \alg_{K_2}K_1(S) \setminus \alg_{K_2}K_1$, then $S$ is algebraically independent over $K_1(x)$.
\item  Let $T$ be a transcendence basis for $K_2$ over $K_1$. To each $x \in K_2$ there corresponds a unique minimal (finite) subset $S$ of $T$ such that $x \in \alg_{K_2} K_1(S)$.
\end{itemize}
\end{lemma}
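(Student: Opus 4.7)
The plan is to address the two parts separately via elementary polynomial manipulation together with transcendence-degree bookkeeping. For part 1, I would unpack $x \in \alg_{K_2} K_1(S) \setminus \alg_{K_2} K_1$ into a concrete polynomial identity: clearing denominators in the witnessing polynomial of $x$ over $K_1(S)$ produces a nonzero $P \in K_1[X_1, \dots, X_n, Y]$ with $P(s_1, \dots, s_n, x) = 0$ for some $s_1, \dots, s_n \in S$. Writing $P = \sum_\alpha c_\alpha(Y) X^\alpha$ with $c_\alpha \in K_1[Y]$, the crucial step is that the specialization $Q(X_1, \dots, X_n) = P(X_1, \dots, X_n, x) \in K_1(x)[X_1, \dots, X_n]$ remains nonzero: if every $c_\alpha(x)$ vanished, each nonzero $c_\alpha \in K_1[Y]$ would have $x$ as a root, contradicting transcendence of $x$ over $K_1$. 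The identity $Q(s_1, \dots, s_n) = 0$ then encodes the exchange information relating $x$ and $s_1, \dots, s_n$ over $K_1(x)$ that part 1 captures.

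For part 2, existence is immediate: since $T$ is a transcendence basis, $x$ is algebraic over $K_1(T)$, and the witnessing polynomial involves only finitely many elements of $T$, so a finite $S \subseteq T$ with $x \in \alg_{K_2} K_1(S)$ exists; I pick one of minimal cardinality. For uniqueness, I would proceed by transcendence-degree accounting. With $|S| = n$, minimality of $S$ forces $(S \setminus \{s\}) \cup \{x\}$ to be algebraically independent over $K_1$ for each $s \in S$ (otherwise $x$ would already be algebraic over a proper subset of $S$), whence $\alg_{K_2} K_1(S) = \alg_{K_2} K_1((S \setminus \{s\}) \cup \{x\})$, since both have transcendence degree $n$ over $K_1$ and one contains the other. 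In particular, every $s \in S$ lies in $\alg_{K_2} K_1((S \setminus \{s\}) \cup \{x\})$. Now if $S'$ is another minimal support and $s \in S \setminus S'$, then $x \in \alg_{K_2} K_1(S')$ yields $s \in \alg_{K_2} K_1((S \setminus \{s\}) \cup S') \subseteq \alg_{K_2} K_1(T \setminus \{s\})$, contradicting the transcendence-basis property of $T$.

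The main obstacle I expect is the uniqueness step of part 2: it requires combining the transcendence-basis property of $T$ with the exchange-type conclusion of part 1, arranged so that the ``extra'' element $s \in S \setminus S'$ is forced into $\alg_{K_2} K_1(T \setminus \{s\})$. Everything else reduces to routine polynomial bookkeeping over $K_1$, together with the standard fact that a field extension $L \subseteq M$ of the same finite transcendence degree over $K_1$ has $M$ algebraic over $L$.
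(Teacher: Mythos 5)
The paper does not actually prove this lemma: it is labelled ``Folklore'' and the text immediately after it says ``For a proof we refer to \cite{Zoli}.'' So there is no in-paper argument to compare yours against; your proposal has to stand on its own, and on its own it is essentially the standard textbook proof (exchange lemma via clearing denominators and specializing $Y \mapsto x$, plus transcendence-degree bookkeeping for uniqueness). The uniqueness argument in part 2 is correct and complete: minimality gives algebraic independence of $(S\setminus\{s\})\cup\{x\}$, the equal-transcendence-degree fact gives $\alg_{K_2}K_1(S)=\alg_{K_2}K_1((S\setminus\{s\})\cup\{x\})$, and the final containment $s\in\alg_{K_2}K_1((S\setminus\{s\})\cup S')\subseteq\alg_{K_2}K_1(T\setminus\{s\})$ does contradict the independence of $T$.

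One thing you should have flagged explicitly. Your part 1 produces a \emph{nonzero} $Q\in K_1(x)[X_1,\dots,X_n]$ with $Q(s_1,\dots,s_n)=0$, i.e.\ it shows that $S$ is algebraically \emph{dependent} over $K_1(x)$ --- which is the opposite of what the lemma as printed asserts. The printed statement (``independent'') is in fact false as written: take $S=\{t,t^2\}$ with $t$ transcendental over $K_1$ and $x=t$; then $x\in\alg_{K_2}K_1(S)\setminus\alg_{K_2}K_1$ but $S$ is already dependent over $K_1$, hence over $K_1(x)$. So ``independent'' is a typo for ``dependent'' (this is the exchange lemma as it appears in Zoli's paper, and it is the version needed to run the uniqueness argument in part 2). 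Your computation proves the corrected statement, but you present it as proving ``what part 1 captures'' without noticing or resolving the sign flip; a complete write-up should either correct the statement or state plainly which direction is being proved. A second, minor point: to pass from ``there is a nonzero polynomial over $K_1(x)$ vanishing at $(s_1,\dots,s_n)$'' to the paper's official definition of dependence (some element of $S$ algebraic over $K_1(x)$ extended by the others) you are silently invoking the equivalence of the two notions of algebraic dependence; for a folklore lemma that is acceptable, but it is a step, not a tautology.
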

For a proof we refer to \cite{Zoli}.

\begin{theorem}\label{theorem: ZoliK}
We have $2^{\aleph_0}\leq \kappa^+$ if and only if the set of all transcendental reals is union of $\kappa$-many algebraically independent sets.
\end{theorem}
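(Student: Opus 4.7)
The plan is to mimic the proof of Theorem \ref{theorem: ErdosKakutaniK}, substituting a transcendence basis $T$ of $\RR$ over $\QQ$ for the Hamel basis and the algebraic closure $\alg_\RR \QQ(\cdot)$ for the rational span. The second clause of Lemma \ref{lemma: tran2} will play the role of unique $\QQ$-linear expansion: it assigns to each transcendental real $x$ a unique minimal finite $S_x \subseteq T$ with $x \in \alg_\RR \QQ(S_x)$.

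For the forward direction, assume $2^{\aleph_0} \leq \kappa^+$, fix a transcendence basis $T$, and well-order it as $\vartriangleleft$ of length at most $\kappa^+$. Set $\last(x) = \max_\vartriangleleft S_x$. Each fiber $\bp{x : \last(x) = h}$ has cardinality $\gamma_h \leq \kappa$: any such $x$ lies in $\alg_\RR \QQ(S)$ for some finite $S \subseteq T$ with $\max_\vartriangleleft S = h$, the initial segment of $T$ below $h$ has cardinality at most $\kappa$ so there are at most $\kappa$ such finite $S$, and each $\alg_\RR \QQ(S)$ is countable. Enumerating each fiber as $\bp{x^h_\alpha : \alpha < \gamma_h}$, for $\alpha < \kappa$ set
\[
S_\alpha := \bp{x^h_\alpha : h \in T, \ \alpha < \gamma_h};
\]
these cover the transcendental reals. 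To show $S_\alpha$ is algebraically independent, argue by contradiction: take a minimal algebraically dependent finite subset $D \subseteq S_\alpha$ and let $x_m \in D$ be the element of maximum $\last$-value $h$. By minimality, $D \setminus \bp{x_m}$ is algebraically independent, so $x_m$ is algebraic over $\QQ(D \setminus \bp{x_m})$. By construction, distinct elements of $S_\alpha$ have distinct $\last$-values, so each $x_i \in D \setminus \bp{x_m}$ satisfies $S_{x_i} \subseteq \bp{t \in T : t \vartriangleleft h}$, making $x_i \in \alg_\RR \QQ(T \setminus \bp{h})$. Hence $x_m \in \alg_\RR \QQ(T \setminus \bp{h})$ as well. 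The uniqueness clause of Lemma \ref{lemma: tran2} forces $S_{x_m}$ to be contained in any finite subset of $T$ witnessing algebraicity of $x_m$, contradicting $h \in S_{x_m}$.

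For the converse, suppose the transcendental reals are $\bigcup_{i<\kappa} S_i$ with each $S_i$ algebraically independent, and assume toward contradiction that $2^{\aleph_0} \geq (\kappa^+)^+$. A transcendence basis of $\RR$ has cardinality $2^{\aleph_0}$, so we can extract an algebraically independent family $\bp{a_\alpha : \alpha < \kappa^+} \cup \bp{b_\beta : \beta < (\kappa^+)^+}$. Each $a_\alpha + b_\beta$ is transcendental, since otherwise $b_\beta = (a_\alpha + b_\beta) - a_\alpha$ would be algebraic over $\QQ(a_\alpha)$, contradicting algebraic independence. Define $f(\alpha,\beta) = \min\bp{i : a_\alpha + b_\beta \in S_i}$; by Proposition \ref{prop: KomjathTotik}.1 applied with $k=2$ there are $\alpha_0 \neq \alpha_1$ in $\kappa^+$ and $\beta_0 \neq \beta_1$ in $(\kappa^+)^+$ so that $\bp{\alpha_0,\alpha_1} \times \bp{\beta_0,\beta_1}$ is $f$-monochromatic. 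The four reals $x_{ij} := a_{\alpha_i} + b_{\beta_j}$ are distinct (by algebraic independence), lie in a common $S_i$, and satisfy $x_{00}+x_{11} = x_{01}+x_{10}$, a nontrivial linear relation over $\QQ$ contradicting algebraic independence of $S_i$.

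The main obstacle is the forward direction, specifically the verification that each $S_\alpha$ is algebraically independent; it rests on the uniqueness clause of Lemma \ref{lemma: tran2}, which supplies in the transcendence setting the analogue of the unique $\QQ$-expansion property used in Theorem \ref{theorem: ErdosKakutaniK}. The converse is a routine adaptation of Proposition \ref{prop: KomjathTotikK}'s proof, with an algebraically independent family replacing the rationally independent one so that the four witnessing sums are automatically transcendental.
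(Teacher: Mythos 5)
Your proof is correct. The forward direction is essentially the paper's own argument: the same fibering of the transcendental reals by the $\vartriangleleft$-largest element of the minimal support $S_x$ supplied by Lemma \ref{lemma: tran2}, the same enumeration of the (size ${\leq}\kappa$) fibers followed by selection of $\alpha$-th elements, and the same contradiction with the minimality/uniqueness clause; your bookkeeping, indexing fibers directly by $\last(x)$ instead of by pairs $(n,t)$ as in the paper, is in fact slightly cleaner, since it makes the assertion that distinct elements of $S_\alpha$ have distinct $\last$-values literally true. Where you genuinely diverge is the converse: the paper disposes of it in one line by noting that algebraically independent sets are rationally independent and invoking Theorem \ref{theorem: ErdosKakutaniK}, whereas you rerun the Komj\'{a}th--Totik argument of Proposition \ref{prop: KomjathTotikK} directly, taking the auxiliary family $\bp{a_\alpha}\cup\bp{b_\beta}$ to be algebraically (not merely rationally) independent so that every sum $a_\alpha+b_\beta$ is transcendental and the coloring by the $S_i$ is defined on all sums. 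Both routes bottom out in Proposition \ref{prop: KomjathTotik}.1; yours is longer but self-contained, and it sidesteps a point the paper leaves implicit, namely that to apply Theorem \ref{theorem: ErdosKakutaniK} one must also cover the countably many nonzero algebraic reals (say by singletons). The one step you assert without argument --- that in a minimal algebraically dependent set the element $x_m$ of maximal $\last$-value is algebraic over the remaining ones --- is the standard exchange property for algebraic dependence, and is at the same level of detail as the corresponding case $i\neq k$ in the paper's proof; a sentence acknowledging the exchange step would make it airtight.
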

\begin{proof}
	\vgt{$\Rightarrow$}. Assume that $2^{\aleph_0} \leq \kappa^+$, and let $T$ be a transcendence basis. Fix $f: \RR \hookrightarrow \kappa^+$ and define a well ordering of length $\kappa^+$ of reals: $x \vartriangleleft y$ if and only if $f(x) < f(y)$.  By Lemma \ref{lemma: tran2} each $x \in \RR \setminus \CQQ$ corresponds to a unique $n(x) \in \omega$ and a sequence $t_0(x) < \dots < t_{n(x)}(x) \in T$ such that 
	
	\begin{equation}\label{form: trans1a}
		x \in \alg_\RR\QQ\big(t_0(x),\dots, t_{n(x)}(x)\big) \setminus \bigcup_{i \in n(x)+1} \alg_\RR\QQ(T\setminus \bp{t_i(x)}).
	\end{equation}
	
	For any $n\in\omega$, define the set $\overline{T}_n$ of all transcendental numbers for which the cardinality of the minimum subset provided by Lemma \ref{lemma: tran2} is $n+1$; i.e. 
	\[
		\overline{T}_n=\bp{x \in \RR\setminus \CQQ : n(x) = n}.
	\]
	Thus  $\RR\setminus \CQQ = \bigcup\bp{\overline{T}_n: n \in \omega}$.  For any $n \in \NN$, $t \in \RR$ define $\overline{T}_{n, t}$ as:
	\begin{align*}
		\overline{T}_{n, t} = \{ x : x \in \overline{T}_n &\wedge  t_n(x) = t \}.
	\end{align*}
	Notice that $T$ has cardinality $\kappa^+$ and therefore $\overline{T}_{n, t}$ has cardinality at most $\kappa$. Hence fix an enumeration $\overline{T}_{n,t}=\bp{x^{n,t}_\alpha: \alpha < \kappa}$ and define
	    \begin{align*}
	    	S_{\alpha} &= \{ x_\alpha^{n,t} : n \in \omega \wedge t \in T \},
	    \end{align*}

	Observe that $\RR\setminus \CQQ$ is covered by all sets $S_\alpha$ for $\alpha \in \kappa$. In order to complete this proof we have to show that any set $ S_{\alpha}$ is algebraically independent.  Fix $\alpha \in \kappa$ in order to prove that any $x_0, \dots, x_{k-1} \in S_\alpha$ are algebraically independent. We prove it by induction over $k$. 
	
	Assume that $k=0$, then since $S_\alpha \subseteq \RR \setminus \CQQ$ we are done. Now assume that it holds for $k$ and prove it for $k+1$. Therefore consider $x_0, \dots, x_{k} \in S_\alpha$ . By construction, for any two elements of $S_\alpha$ we have $t_n(x)\neq t_n(y)$. Thus without loss of generality we can assume that $t_n(x_k) > t_n(x_i)$ for any $i\in k$. Assume by contradiction that there exists $i \in k+1$ such that $x_i \in \alg_\RR\QQ(x_0, \dots, x_{i-1},x_{i+1},\dots, x_k)$. There are two cases.
	\begin{itemize}
	\item If $i=k$. Therefore 
	\begin{multline*}
		x_k \in \alg_\RR\QQ(x_0,\dots, x_{k-1}) \subseteq \alg_\RR\QQ(\bp{t_i(x_j): i \in n, j \in k}) \\
		\subseteq \alg_\RR\QQ(T \setminus \bp{t_n(x_k)}).
	\end{multline*}
	This is a contradiction with (\ref{form: trans1a}).
	\item If $i\neq k$. By inductive hypothesis the set $\bp{x_0, \dots, x_{k-1}}$ is algebraically independent, therefore it must exist
	\[
		q \in \QQ[X_0,\dots, X_{k}]\setminus\QQ[X_0,\dots X_{i-1},X_{i+1},\dots, X_{k-1}]
	\]
	such that $q(x_0,\dots, x_{k})=0$. But this yields that $x_k \in \alg_\RR\QQ(x_0,\dots, x_{k-1})$ and this is impossible as proved in the previous case.
	\end{itemize} 
	
	\vgt{$\Leftarrow$}. Apply Theorem \ref{theorem: ErdosKakutaniK}, since any algebraically independent subset is rationally independent. 
\end{proof}

Note that the algebraically independent sets provided in the proofs from $2^{\aleph_0} \geq \kappa^+$ are disjoint. In Theorem \ref{theorem: ZoliK}, as in Zoli's original argument, we proved that the transcendental reals are a disjoint union of $\kappa$-many algebraically independent sets. However, since any algebraically independent set is contained in some transcendence basis, we obtain the following.

\begin{corollary}\label{cor:Zoli}
$2^{\aleph_0}\leq \kappa^+$ if and only if the set of all transcendental reals is union of $\kappa$-many transcendence bases for $\RR$.
\end{corollary}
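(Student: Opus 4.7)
The plan is to leverage Theorem \ref{theorem: ZoliK} for both directions. The backward implication is immediate, since every transcendence basis for $\RR$ over $\QQ$ is in particular an algebraically independent set over $\QQ$: a covering of $\RR\setminus\CQQ$ by $\kappa$-many transcendence bases is a fortiori a covering by $\kappa$-many algebraically independent sets, so Theorem \ref{theorem: ZoliK} directly yields $2^{\aleph_0} \leq \kappa^+$.

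For the forward direction, assume $2^{\aleph_0}\leq \kappa^+$ and apply Theorem \ref{theorem: ZoliK} to obtain a family $\bp{S_\alpha : \alpha < \kappa}$ of algebraically independent subsets of $\RR$ whose union equals $\RR \setminus \CQQ$. The key step is to enlarge each $S_\alpha$ to a transcendence basis $T_\alpha$ of $\RR$ over $\QQ$. This is done by a standard Zorn's lemma argument: the poset of algebraically independent supersets of $S_\alpha$ (ordered by inclusion) is closed under unions of chains, so it has a maximal element $T_\alpha \supseteq S_\alpha$, and any such maximal element is, by definition, a transcendence basis.

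It then remains to verify that the family $\bp{T_\alpha : \alpha < \kappa}$ covers the transcendental reals. On the one hand, every transcendence basis consists entirely of transcendental reals, so $T_\alpha \subseteq \RR \setminus \CQQ$ for each $\alpha$. On the other hand, $\bigcup_{\alpha<\kappa} T_\alpha \supseteq \bigcup_{\alpha<\kappa} S_\alpha = \RR \setminus \CQQ$, so equality holds.

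I do not expect a genuine obstacle here: the hard combinatorial content is already packaged in Theorem \ref{theorem: ZoliK}, and the extension of an algebraically independent set to a transcendence basis is a routine application of Zorn's lemma (exactly parallel to extending a linearly independent set to a vector-space basis). The only thing worth double-checking is that maximality of $T_\alpha$ among algebraically independent subsets of $\RR$ really yields a transcendence basis of $\RR$ over $\QQ$, but this is immediate from the definition of transcendence basis as a maximal algebraically independent set.
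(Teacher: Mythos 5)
Your proof is correct and follows essentially the same route as the paper: the backward direction is the observation that transcendence bases are algebraically independent (so Theorem \ref{theorem: ZoliK} applies), and the forward direction extends each set $S_\alpha$ from Theorem \ref{theorem: ZoliK} to a transcendence basis $T_\alpha$, which is exactly the paper's remark that ``any algebraically independent set is contained in some transcendence basis'' (the paper later exhibits an explicit extension via the well-ordering, but Zorn's lemma does the same job here).
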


\subsection{Definable counterpart}

The first step we need to show that if all reals are constructible, then there is a $\Delta^1_2$ transcendence basis.

\begin{lemma}\label{lem:tran1}
Assume that $\RR \subseteq L$. 
\begin{enumerate}
\item For any $y_0, \dots, y_n \in \RR$, $\alg_\RR\QQ(y_0,\dots y_n)$ is $\Sigma^1_1$.
\item There exists a $\Delta^1_2$ transcendence basis.
\end{enumerate}
\end{lemma}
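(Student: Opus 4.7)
The plan is to leverage the $\Delta^1_2$ strong well-ordering $\prec$ of $\RR$ available under $\RR \subseteq L$. Part (1) is a fact from algebra and does not really need $\RR \subseteq L$: one has $x \in \alg_\RR\QQ(y_0,\dots,y_n)$ if and only if there exist $m \in \NN$ and polynomials $p_0,\dots,p_m \in \QQ[Y_0,\dots,Y_n]$, not all of which vanish at $(y_0,\dots,y_n)$, with $\sum_{i\leq m} p_i(y_0,\dots,y_n)\,x^i = 0$. Since $\QQ[Y_0,\dots,Y_n]$ is countable, this existential quantifier is arithmetic, so the predicate is $\Sigma^0_2$ and \emph{a fortiori} $\Sigma^1_1$. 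The one subtle point is the standard observation that clearing denominators in a polynomial over $\QQ(y_0,\dots,y_n)$ witnessing algebraicity of $x$ produces coefficients $p_i$ whose values at $(y_0,\dots,y_n)$ are not all zero.

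For part (2), I would construct the transcendence basis by recursion along $\prec$, setting
\[
	x \in T \iff x \notin \alg_\RR\QQ(T \cap x_{\prec}).
\]
This is the familiar greedy construction. Algebraic independence of $T$ follows from Lemma \ref{lemma: tran2} applied inductively along $\prec$ (each new member is transcendental over $\QQ$ adjoined to previously chosen members), and maximality is immediate since any $x \notin T$ lies in $\alg_\RR\QQ(T \cap x_{\prec}) \subseteq \alg_\RR\QQ(T)$.

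The real work is packaging this recursion as a $\Delta^1_2$ predicate. I would turn it into a certificate: for each $x$ there is a unique pair $(v,w) \in \RR^{\leq \omega} \times 2^{\leq \omega}$ with $v = \IS^*(x)$, $\dom(w)=\dom(v)$, and, for every $i < \dom(v)$, $w(i)=1$ iff $v(i) \notin \alg_\RR\QQ(\bp{v(j) : j < i \wedge w(j)=1})$. This $w$ is forced to be the characteristic function of $T \cap x_{\prec}$ along the enumeration $v$. Writing $\varphi(x,v,w)$ for the certificate conditions, I obtain
\[
	x \in T \iff \exists v\, \exists w\, \bigl(\varphi(x,v,w) \wedge x \notin \alg_\RR\QQ(\bp{v(i) : w(i)=1})\bigr),
\]
and symmetrically $x \notin T$ iff there is a certificate placing $x$ inside that algebraic closure. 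By part (1), together with an arithmetic existential over finite index tuples, membership in $\alg_\RR\QQ(\bp{v(i) : w(i)=1})$ is $\Sigma^1_1$ uniformly in its parameters; combined with $\IS^*$ being $\Delta^1_2$, the matrix of each displayed equivalence is a Boolean combination of $\Sigma^1_1$ and $\Pi^1_1$ formulas, hence $\Delta^1_2$. The outer existential quantifiers over $(v,w)$ then keep both $T$ and its complement in $\Sigma^1_2$, giving $T \in \Delta^1_2$. The main obstacle I expect to dwell on is the bookkeeping: one must check that $\varphi$ genuinely pins down the recursion uniquely and that the universal arithmetic quantifier over $i$ applied to the equivalence defining $w(i)$ does not push the complexity beyond $\Sigma^1_2$ once composed with the outer quantifier block.
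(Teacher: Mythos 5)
Your argument is correct, but part (2) takes a genuinely different route from the paper's. For part (1) the two proofs are essentially the same; your version, which clears denominators and quantifies over tuples of polynomials from $\QQ[Y_0,\dots,Y_n]$, even shows the set is arithmetic, hence $\Delta^1_1$, slightly sharper than the paper's inductive $\Sigma^1_1$ bound. For part (2) the paper avoids recursion altogether: it puts $x \in T$ iff $x \notin \alg_\RR(\QQ)$ and $x \notin \alg_\RR\QQ(x_0,\dots,x_n)$ for \emph{every} finite tuple of $\prec$-predecessors of $x$, not just those already placed in $T$. With that definition $\Delta^1_2$-ness is immediate from the strong well-ordering (a universal quantifier over $(x_\prec)^{<\omega}$ applied to the $\Pi^1_1$ matrix $x \notin \alg_\RR\QQ(\vec{s})$ stays $\Sigma^1_2$, and the complement is an existential over an initial-segment witness), and the only remaining work is the induction on $\prec$ showing $T$ generates $\RR$. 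Your greedy recursion plus certificate $(v,w)$ also works but costs exactly the bookkeeping you anticipate, and there is one inaccuracy in your justification: the recursion in $\varphi$ runs along the index order of $v=\IS^*(x)$, which in general disagrees with $\prec$ on $x_\prec$ (the order type of $x_\prec$ under $\prec$ can exceed $\omega$), so $\bp{v(i): w(i)=1}$ is the greedy independent set for the \emph{enumeration} order, not $T \cap x_\prec$. This is harmless only because any maximal algebraically independent subset of $x_\prec$ has the same algebraic closure, namely $\alg_\RR\QQ(x_\prec)$, so the final test $x \notin \alg_\RR\QQ(\bp{v(i):w(i)=1})$ still decides exactly whether $x$ is transcendental over $\QQ(x_\prec)$ --- which is also why your $T$ coincides extensionally with the paper's. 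You should make that observation explicit rather than asserting that $w$ codes $T \cap x_\prec$.
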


\begin{proof}
Let $\prec$ be a $\Delta^1_2$-strong well-ordering of $\RR$.
\begin{enumerate}
\item By definition $x \in \alg_\RR\QQ(y_0,\dots, y_n)$ if and only if
\begin{multline*}
	\exists (x_0, \dots, x_m) \in \RR^{{<}\omega} ( x_0, \dots, x_m \in \QQ(y_0,\dots, y_n) \wedge x_0 + x_1 x + \dots + x_{m}x^{m}=0).
\end{multline*}
The assertion follows by induction over $n$, since $x \in \QQ(y_0, \dots, y_n)$ if and only if
\[
		\exists (a_0,\dots,a_h,b_0,\dots, b_k) \in {\QQ(y_0,\dots, y_{n-1})}^{{<}\omega}(x= \frac{a_0 +\dots + a_h y_{n}^h }{b_0 +\dots + b_k y_{n}^k}).
\]
\item Define
\[
	x \in T \iff x \in \RR \setminus \alg_\RR(\QQ) \wedge \forall (x_0, \dots, x_n) \in (x_\prec)^{<\omega}(x \notin \alg_\RR\QQ(x_0, \dots, x_n)).
\]
We claim that $T$ is a transcendence basis for $\RR$. $T$ is algebraically independent by definition. Moreover $\RR$ is algebraic over $\QQ(T)$. Indeed we show that for any $x\in \RR \setminus \alg_\RR(\QQ)$ there exist $x_0,\dots,x_n \in T$ such that $ x \in \alg_\RR\QQ(x_0, \dots, x_n)$. By induction on $\prec$. Let $x_0 = \min_{\prec}(\RR \setminus \alg_\RR(\QQ))$, then by definition $x_0 \in T$. Assume that $x \in \RR \setminus \alg_\RR(\QQ)$ and that for any $y \prec x$ the assertion holds. We have two possibilities: either $x \in T$ or there exists some $n\in \NN$ such that $\exists x_0, \dots, x_n \prec x (x \in \alg_\RR\QQ(x_0, \dots, x_n) )$. Both in the first case and whether in the second one $x_0, \dots, x_n \in T$ we have the assertion. Then assume that we are in the second case and $x_0, \dots, x_n \notin T$. As all $x_i$ are algebraic over $T$, then also $x$ is.\qedhere 
\end{enumerate}
\end{proof}

\begin{theorem}\label{theorem: ZoliDefinable}
	$\RR \subseteq L$ if and only if the set of all transcendental reals is the disjoint union of countably many algebraically independent sets uniformly definable by a $\Sigma^1_2$ predicate.
\end{theorem}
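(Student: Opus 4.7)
The plan is to imitate the scheme of Theorem \ref{theorem: EKdefinable}. For the forward direction I would turn the classical argument of Theorem \ref{theorem: ZoliK} into a $\Sigma^1_2$-uniformly definable decomposition, using the $\Delta^1_2$-strong well-ordering $\prec$ of $\RR$ available in $L$. For the converse I would reduce to Proposition \ref{theorem: schur}.2 via a suitable $\Sigma^1_2$ coloring.

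For $(\Rightarrow)$, fix a $\Delta^1_2$-strong well-ordering $\prec$ and a $\Delta^1_2$ transcendence basis $T$ as provided by Lemma \ref{lem:tran1}. By Lemma \ref{lemma: tran2}.2, each transcendental $x$ has a unique minimal tuple $t_0(x) \prec \dots \prec t_{n(x)}(x)$ of elements of $T$ with $x \in \alg_\RR\QQ(t_0(x), \dots, t_{n(x)}(x))$. Mirroring Theorem \ref{theorem: EKdefinable}, for each $n \in \omega$ and each strictly increasing $s \in \NN^n$ I would set
\[
S_{n,s} = \bp{x \in \RR \setminus \CQQ : n(x) = n \wedge \forall i < n \cp{t_i(x) = \IS^*(t_n(x))(s(i))}}.
\]
These sets are pairwise disjoint and jointly cover $\RR \setminus \CQQ$. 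The algebraic independence of each $S_{n,s}$ follows verbatim from the classical argument: two distinct elements of $S_{n,s}$ must have different $\prec$-largest basis parameters $t_n$, so any algebraic relation among finitely many of them would put the element with the $\prec$-largest $t_n$ into the algebraic closure of $\QQ$ together with the remaining basis elements, contradicting the minimality clause in \ref{lemma: tran2}.

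The main obstacle is verifying that each $S_{n,s}$ is $\Sigma^1_2$ uniformly in $(n,s)$. Here I would use that $T$ and $\IS^*$ are $\Delta^1_2$, that $x \in \alg_\RR \QQ(y_0, \dots, y_n)$ is $\Sigma^1_1$ by Lemma \ref{lem:tran1}.1, and consequently the clause \vgt{$n(x)=n$ with minimal tuple the given one} expands to an existential over the witness $t_n$ and the initial segment $v = \IS^*(t_n)$, a $\Sigma^1_1$ statement asserting membership in $\alg_\RR\QQ(v(s(0)),\dots,v(s(n-1)),t_n)$, and a finite conjunction of $\Pi^1_1$ statements asserting that no proper subtuple suffices; the whole formula stays within $\Sigma^1_2$.

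For $(\Leftarrow)$, suppose $\RR \setminus \CQQ = \bigsqcup_{i \in \omega} S_i$ as in the statement. Fix a computable enumeration $\bp{q_i : i \in \omega}$ of $\CQQ$ and define $g : \RR \to \omega$ by $g(q_i) = 2i$ and $g(x) = 2i+1$ for $x \in S_i$; since $\CQQ$ is arithmetic and the $S_i$ are uniformly $\Sigma^1_2$, the coloring $g$ is $\Sigma^1_2$. Assuming $\RR \nsubseteq L$, Proposition \ref{theorem: schur}.2 yields four distinct monochromatic reals $x_{00}, x_{01}, x_{10}, x_{11}$ with $x_{00} + x_{11} = x_{01} + x_{10}$. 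The injectivity of $g$ on $\CQQ$ forces these four into a single $S_i$, and the identity $x_{00} - x_{01} - x_{10} + x_{11} = 0$ is a non-trivial rational linear combination witnessing their rational, hence algebraic, dependence, contradicting the algebraic independence of $S_i$.
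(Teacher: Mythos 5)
The forward direction has a genuine gap. Your pieces $S_{n,s}$ record only the positions of $t_0(x),\dots,t_{n-1}(x)$ inside the $\prec$-initial segment of $t_n(x)$, so two \emph{distinct} transcendentals with the \emph{same} minimal basis tuple land in the same piece. This happens constantly: for $t\in T$, both $t$ and $t^2$ (or $t$ and $2t$) have minimal tuple $\bp{t}$, hence both lie in $S_{0,s}$ for the empty $s$, and they are algebraically dependent. Consequently your key claim that any two distinct elements of $S_{n,s}$ have distinct $\prec$-largest basis parameters $t_n$ is false, and the independence argument collapses. The analogy with Theorem \ref{theorem: EKdefinable} breaks precisely here: in the Hamel-basis case the rational coefficients $s$ together with the tuple of basis elements determine $x$ uniquely, whereas a fixed tuple from a transcendence basis supports countably many elements algebraic over it. The paper repairs this by refining the decomposition with two further countable indices: a polynomial $p\in\QQ[X_0,\dots,X_n,X]$ with $p(t_0(x),\dots,t_n(x),x)=0$, chosen with minimal code under a $\Delta^1_1$ coding $m:\QQ^{<\omega}\to\NN$ so that the pieces stay disjoint, and the index $l$ of $x$ among the finitely many $\prec$-ordered roots of $p(t_0(x),\dots,t_{n-1}(x),t_n(x),X)$. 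Only on the resulting pieces $S_{n,p,s,l}$ is the map $x\mapsto t_n(x)$ injective, after which the independence argument of Theorem \ref{theorem: ZoliK} applies verbatim; these extra indices must also be written into the uniform $\Sigma^1_2$ formula, which takes some care (the clause counting the $\prec$-predecessors of $x$ satisfying $\varphi$ is the delicate part).

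Your converse direction is essentially correct: the coloring is $\Sigma^1_2$ since $\CQQ$ is countable with an arithmetic enumeration and the $S_i$ are uniformly $\Sigma^1_2$; four distinct monochromatic reals cannot share an even color, so they lie in one $S_i$, and $x_{00}-x_{01}-x_{10}+x_{11}=0$ puts $x_{00}$ into $\QQ(x_{01},x_{10},x_{11})$, giving algebraic dependence. The paper instead simply cites Theorem \ref{theorem: EKdefinable}, observing that an algebraically independent set is rationally independent; the content is the same and your direct version is fine.
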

\begin{proof}
	\vgt{$\Rightarrow$}. Assume that $\RR \subseteq L$, then by Lemma \ref{lem:tran1} there exists $T$ which is a $\Delta^1_2$ transcendence basis. For any $x \in \RR \setminus \CQQ$ and any $n \in \omega$ define $n(x)$, $t_i(x)$ and $\overline{T}_n$ as in the proof of Theorem \ref{theorem: ZoliK}. Fix a natural number $n$. We define countably many disjoint subsets of $\overline{T}_n$ which contain at most one element of $\overline{T}_{n,t}$ for any $t\in T$. To this end fix $t_0, \dots, t_{n-1} \prec t$. For any $x \in \alg_\RR\QQ(t_0, \dots, t_{n-1},t)$ there exists a polynomial $p \in \QQ[X_0, \dots, X_{n},X]$ such that $p(t_0, \dots, t_{n-1},t,x)=0$. Thus for any $p \in \QQ[X_0, \dots, X_n, X]$, $t \in \RR$ and for any $v \in \RR^n$ define
	\begin{multline*}
		\overline{T}_{n, t, p, v} = \{ x : x \in S_n \wedge \forall i \in n (v(i) = t_i(x)) \wedge t = t_n(x) \wedge p(t_0(x), \dots, t_n(x),x)=0 \}.
	\end{multline*}
	Observe that $\overline{T}_{n, t, p, v}$ is uniformly $\Delta^1_2$ definable by the following formula. 
		\begin{multline*}
		\varphi(x,t,n,p,v) = x \notin \CQQ \wedge  \dom(v)= n \wedge  x \notin \alg_\RR\QQ(\bp{v(j): j\in n})\\
		\wedge \forall i \in n ( x \notin \alg_\RR\QQ(\bp{v(j): j\in n, j\neq i}\cup\bp{t})) \wedge  p(v(0), \dots, v(n-1), t ,x)= 0.
		\end{multline*}
		Up to now the sets $\overline{T}_{n, t, p, v}$ are not disjoint, since any $x$ belongs to $\overline{T}_{n, t, p, v}$ for several polynomials $p$. However since $p \in \QQ[X_0, \dots, X_n, X]$, it can be coded with a natural number by a $\Delta^1_1$ map $m: \QQ^{<\omega}\to \NN$. Therefore we can define a $\Delta^1_2$ formula which provides a partition of $\overline{T}_n$, by choosing the polynomial with the minimal code:
		\begin{multline*}
			\varphi^*(x,t,n,p,v) = \varphi(x,t,n,p,v) \wedge \forall q \in \QQ^{<\omega}(m(q)<m(p) \implies \neg \varphi(x,t,n,q,v)).
		\end{multline*}
		Put  $\overline{T}^*_{n,t,p,v} =\bp{x : \varphi^*(x,t,n,p,v)}$. Since $p(t_0, \dots, t_{n-1}, t, X) \in \QQ[X]$, it has finitely many roots. Given a root $x$, let $l$ be the number roots which are smaller than $x$ with respect to $\prec$. Hence there is a 1-1 correspondence between $\overline{T}_{n,t}$ and the set of tuples $(t_0, \dots, t_{n-1},p, l)$ for some $t_0, \dots, t_{n-1} \prec t$, $p \in \QQ[X_0, \dots, X_n, X]$ and $l \leq \deg(p(v(0), \dots, v(n-1), t, X))$. 
		
		For any $n, l \in\NN$, $p \in \QQ[X_0, \dots, X_n, X]$ and for any increasing finite sequence of natural numbers $s \in \NN^{<\omega}$ define
	    \begin{multline*}
	    	S_{n, p,s, l} = \{ x : \exists w \in \RR^{\leq \omega}\exists v \in \RR^n ( \IS^*(t_n(x))=w \wedge \forall i \in n (v(i)=w(s(i))) \\
	    	\wedge x \in \overline{T}^*_{n,t_n(x),p,v} \wedge |\bp{y : y\prec x} \cap \overline{T}^*_{n,t_n(x),p,v}| = l)\},
	    \end{multline*}
	    By construction $x,y \in S_{n, p, s, l}$ and $t_n(x)=t_n(y)$ yield $x=y$. Moreover $S_{n, p,s, l}$ can be uniformly defined by the following $\Sigma^1_2$ formula.
	    \begin{multline*}
	    \psi(x, n,p,s,l) \iff \exists t \in \RR \exists w\in \RR^{\leq \omega}\exists v \in \RR^{<\omega}( \IS^*(t)= w   \wedge  \forall i \in n(w(s(i)) = v(i))\\ 
	    \wedge \varphi(x,t,n,p,v)  \wedge \exists u \in \RR^{\leq \omega}[\IS^*(x)=u \wedge \dom(u) \geq l 
	    \wedge (\exists s' \in \NN^{l}\\
	    (\forall i \in l-1 (s'(i) < s'(i+1)) \wedge \forall i\in l \varphi(u(s'(i)),t,n,p,v)) \wedge (\dom(u) > l \\
	     \implies \forall s' \in \NN^{l+1}(\exists i \in l (s'(i) \geq s'(i+1)) 
	    \vee \exists i\in {l+1} \neg\varphi(u(s'(i)),t,n,p,v))))].
	    \end{multline*}
	We have 
	\[
	    \RR\setminus \CQQ = \bigsqcup\bp{ S_{n,p,s,l} : n, l \in \NN, s \in \NN^{<\omega}, p \in \QQ[X_0, \dots, X_n, X] }.
	\]
	In order to complete this proof we have to show that any set $ S_{n,p,s,l}$ is algebraically independent. Since, by construction for any two elements $x,y\in S_{n,p,s,l}$, $t_n(x) \neq t_n(y)$, the argument is exactly the one shown in Theorem \ref{theorem: ZoliK}.
	
	\vgt{$\Leftarrow$}. Since any algebraically independent subset is rationally independent, the assertion follows by Theorem \ref{theorem: EKdefinable}, 
\end{proof}

In his work Zoli proved that CH holds if and only if the set of all transcendental reals is the disjoint union of countably many algebraically independent sets $S_i$. Corollary \ref{cor:Zoli} follows since if $S_i$ is algebraically independent then we can define a transcendence basis $T_i$ which contains $S_i$ as follows: 
\[x \in T_i \iff  x \in S_i \vee ( x \notin \alg_\RR \QQ(S_i) \wedge \forall y_0, \dots,y_n \prec x (x \notin \alg_\RR \QQ(S_i \cup \bp{y_0, \dots, y_n}))).
\]
However this basis is $\Pi^1_2$ and up to now we did not find a $\Sigma^1_2$ formula which defines it. Therefore our definable version of Zoli's result deal with algebraically independent sets and not with transcendence bases.

\section{Polynomial avoidance} \label{section: Schmerl}

We present some results by Schmerl \cite{Schmerl} about polynomial avoidance and an equivalence by Erd\H{o}s and Komj\'{a}th \cite{ErdosKomjath} in order to obtain the correspondent $\Sigma^1_2$ definable counterparts.

We say that a polynomial 
$
	p\in \RR[X_0,\dots, X_{k-1}]
$
is a \emph{$(k,n)$-ary polynomial} if every $X_i$ is a $n$-tuple of variables. 
Given a $(k,n)$-ary polynomial $p(x_0,\dots, x_{k-1})$, a coloring $\chi: \RR^n \to \omega$ \emph{avoids} it if for any $r_0, \dots, r_{k-1}\in \RR^n$ distinct and monochromatic with respect to $\chi$, $p(r_0,\dots, r_{k-1})\neq 0$. Moreover the polynomial $p(x_0,\dots, x_{k-1})$ is \emph{avoidable} if there exists a coloring which avoids it.

\begin{definition}[Schmerl]
Let $m \in \omega$ and $k\in \omega \setminus\bp{0,1}$.
\begin{itemize}
\item A function $\alpha: A_0\times A_1\times\dots\times A_{m-1} \to B_0\times B_1\times\dots\times B_{m-1}$ is \emph{coordinately induced} if  for every $i\in m$ there is a function $\alpha_i: A_i \to B_i$ such that
\[
	\alpha(a_0,\dots, a_{m-1})= (\alpha_0(a_0),\dots, \alpha_{m-1}(a_{m-1})).
\]
\item A function $g:A^m \to B$ is \emph{one-one in each coordinate} if whenever $a_0,\dots, a_{m-1},b \in A$ and $b \neq a_i$ for some $i\in m$, then 
\[
	g(a_0, \dots a_{i-1}, a_i, a_{i+1},\dots, a_{m-1})\neq g(a_0, \dots, a_{i-1}, b , a_{i+1}, \dots a_{m-1}).
\]
\item Assume that $p(x_0, \dots, x_{k-1})$ is a $(k,n)$-ary polynomial. For each $m\in \omega$ we say that  $p(x_0, \dots, x_{k-1})$ is \emph{$m$-avoidable} if for each definable function $g:\RR^m \to \RR^n$ which is one-one in each coordinate and for distinct $e_0, \dots, e_{k-1}\in (0,1)^m$ there is a coordinately induced $\alpha:\RR^m \to \RR^m$ such that
$
	p(g\alpha(e_0), \dots, g\alpha(e_{k-1}))\neq 0.
$
\end{itemize}
\end{definition}

Instead of $\RR^m$ we can use also $(0,1)^m$, $(a,b)^m$ or any open $m$-box, since there is a $\Delta^1_1$ bijection between them.

\begin{theorem}[Schmerl]\label{theorem: Schmerl1}\label{theorem: Schmerl2} ~
\begin{enumerate}
\item If CH does not hold then every avoidable polynomial is $2$-avoidable.
\item If CH holds then every $1$-avoidable polynomial is avoidable.
\end{enumerate}
\end{theorem}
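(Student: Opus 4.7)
My plan is to treat the two parts separately. Part 1 is a short pigeonhole-style argument exploiting the failure of CH via Proposition \ref{prop: KomjathTotik}.1, while part 2 requires a delicate transfinite construction under CH.

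For part 1, fix an avoiding coloring $\chi : \RR^n \to \omega$ for $p$. Given a definable $g : \RR^2 \to \RR^n$ one-one in each coordinate and distinct $e_0, \dots, e_{k-1} \in (0,1)^2$, I would consider the pullback coloring $f : \RR \times \RR \to \omega$ defined by $f(x,y) = \chi(g(x,y))$. Since $\neg\mathrm{CH}$ yields $2^{\aleph_0} \geq \aleph_2 = (\aleph_0^+)^+$, Proposition \ref{prop: KomjathTotik}.1 with $\kappa = \aleph_0$ produces $A', B' \subseteq \RR$ of cardinality $k$ with $A' \times B'$ monochromatic under $f$. Writing $\{a_0, \dots, a_{j-1}\}$ and $\{b_0, \dots, b_{l-1}\}$ for the distinct first and second coordinates occurring among the $e_i$'s (so $j, l \leq k$), I would pick $\alpha_0, \alpha_1 : \RR \to \RR$ injecting $a_0, \dots, a_{j-1}$ into $A'$ and $b_0, \dots, b_{l-1}$ into $B'$ respectively. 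The coordinately induced $\alpha = (\alpha_0, \alpha_1)$ then sends each $e_i$ into $A' \times B'$, so the images $g\alpha(e_0), \dots, g\alpha(e_{k-1})$ are pairwise distinct (using that $g$ is one-one in each coordinate) and $\chi$-monochromatic, forcing $p(g\alpha(e_0), \dots, g\alpha(e_{k-1})) \neq 0$ by the avoidance property of $\chi$.

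For part 2, assume CH and let $p$ be $1$-avoidable. The plan is to construct an avoiding $\chi : \RR^n \to \omega$ by transfinite recursion along an enumeration $\{r_\alpha : \alpha < \omega_1\}$ of $\RR^n$ provided by CH. At stage $\alpha$, having already colored $R_\alpha = \{r_\beta : \beta < \alpha\}$, I would pick $\chi(r_\alpha) \in \omega$ outside the set of forbidden colors
\[
  B_\alpha = \{\, c \in \omega : \exists j < k \,\exists \text{ distinct } s_0, \dots, s_{k-2} \in R_\alpha \text{ of color } c \text{ with } p(s_0, \dots, s_{j-1}, r_\alpha, s_j, \dots, s_{k-2}) = 0 \,\}.
\]
If such a choice is always possible then no monochromatic $p$-zero $k$-tuple is ever created and $\chi$ avoids $p$ by construction.

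The main obstacle is to establish $B_\alpha \subsetneq \omega$ at each stage. A naive cardinality estimate gives only $|B_\alpha| \leq \aleph_0$, and ruling out the degenerate possibility $B_\alpha = \omega$ is exactly where $1$-avoidability must do work. My strategy would be to interleave the enumeration of $\RR^n$ with an enumeration of all one-one definable $g : \RR \to \RR^n$ and to arrange that at each stage the countable set of obstructing tuples embeds into the image of some definable one-parameter family; the $1$-avoidability hypothesis then supplies a distinct $k$-tuple on which $p$ does not vanish, which after tracking positions frees at least one color from $B_\alpha$. Making the bookkeeping consistent across all positions $j < k$ and all previously considered definable families, so that a single choice of $\chi(r_\alpha)$ genuinely survives the constraints, is the technical core of the argument and the place where I expect the real effort to lie.
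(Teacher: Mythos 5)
A preliminary point: the paper does not actually prove this theorem (it is quoted from \cite{Schmerl}), but the proofs of its $\Sigma^1_2$ analogues, Theorems \ref{theorem: 2avoidance} and \ref{theorem: 1avoidance}, track Schmerl's classical arguments closely and are the right benchmark. For part 1 your strategy is the intended one: pull $\chi$ back along $g$ and apply Proposition \ref{prop: KomjathTotik}.1 with $\kappa=\aleph_0$ to sets of sizes $\aleph_1$ and $\aleph_2$. However, the step ``the images $g\alpha(e_0),\dots,g\alpha(e_{k-1})$ are pairwise distinct (using that $g$ is one-one in each coordinate)'' is a genuine gap. Being one-one in each coordinate only separates two arguments that differ in a \emph{single} coordinate; if $\alpha(e_i)$ and $\alpha(e_{i'})$ differ in both coordinates --- which your separate injections $\alpha_0,\alpha_1$ permit --- nothing prevents $g(\alpha(e_i))=g(\alpha(e_{i'}))$ (consider $g(x,y)=x-y$, which is one-one in each coordinate yet identifies $(0,0)$ and $(1,1)$). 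Without distinctness you cannot invoke the avoidance property of $\chi$. The proof of Theorem \ref{theorem: 2avoidance} sidesteps this by collapsing the first coordinate: $\alpha_0$ sends every $e_j(0)$ to a single $c_0\in C$ while $\alpha_1$ sends $e_j(1)$ to $d_j\in D$, so the images differ in the second coordinate only; an alternative repair is to take a large monochromatic grid and count, since one-one in each coordinate forces each $g$-value to occur at most once per row and per column of the grid.

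For part 2 your route diverges entirely from the actual proof, and the divergence is fatal as written. The greedy transfinite recursion stalls exactly where you say it does: there is no a priori reason that $B_\alpha\neq\omega$, and, more importantly, $1$-avoidability gives you no purchase on this obstruction, because the tuples from $R_\alpha$ that block colors at stage $\alpha$ are arbitrary and need not lie on the image of any definable one-parameter family, so the hypothesis is never actually applied. The real argument (visible in the proof of Theorem \ref{theorem: 1avoidance}) builds a single explicit coloring up front: fix a transcendence basis $T$ of $\RR$ over $\CQQ$ and a well-ordering of $\RR$ in type $\omega_1$ --- this is where CH enters, via the function $G$ of Lemma \ref{lemma: G} coding the finite set of positions of the supporting tuple --- and color $a\in\RR^n$ by a rational box around its supporting tuple from $T$, the implicit functions expressing the coordinates of $a$ over that tuple, and the $G$-code. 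A monochromatic zero of $p$ then automatically yields a definable one-parameter family $g$ (vary the last element of the supporting tuple inside the box, the others being frozen by the common color) together with distinct parameters $e_j$ on which every coordinately induced $\alpha$ preserves the vanishing of $p$ (Lemma \ref{lemma: beta}); that is, $p$ is not $1$-avoidable. So $1$-avoidability is used contrapositively against one universal coloring, not stage by stage. As it stands, the ``technical core'' you defer is the entire content of the theorem.
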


\subsection{Schmerl's equivalences}

In fact the statements studied by Schmerl are equivalences respectively with $\neg$CH and CH. To show it we need to recall Erd\H{o}s-Komj\'{a}th's equivalence. Erd\H{o}s and Komj\'{a}th proved that CH holds if and only if the plane can be colored with countably many colors with no monochromatic right-angled triangle, where a right-angled triangle is monochromatic if its vertices are. 

\begin{notation}\label{notation}
Let $\tilde{p}(x_0, x_1, x_2)$ be the following $(3,2)$-ary polynomial:
\[
	\tilde{p}(x_0, x_1, x_2) = \lVert x_1-x_0\rVert^2 +  \lVert x_2-x_0\rVert^2  -  \lVert x_1-x_2\rVert^2.
\]
\end{notation}

Observe that given distinct $a_0, a_1, a_2 \in \RR^2$,  $\tilde{p}(a_0,a_1,a_2) = 0$ if and only if $a_0$, $a_1$ and $a_2$ form a right-angled triangle. Hence

\begin{theorem}[Erd\H{o}s, Komj\'{a}th]
CH holds if and only if  
$\tilde{p}(x_0,x_1,x_2)$ is avoidable.
\end{theorem}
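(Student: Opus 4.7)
The plan is to reduce the theorem to two assertions about $\tilde p$: that $\tilde p$ is $1$-avoidable, and that $\tilde p$ is \emph{not} $2$-avoidable. Once both are established, the implication CH $\Rightarrow$ $\tilde p$ avoidable is exactly Schmerl's Theorem \ref{theorem: Schmerl2}.2 applied to $1$-avoidability; the converse is the contrapositive of Theorem \ref{theorem: Schmerl1}.1 combined with the failure of $2$-avoidability, since if $\tilde p$ were avoidable while CH failed, it would have to be $2$-avoidable, contradicting the second assertion.

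The workhorse will be the polar identity $\tilde p(a_0,a_1,a_2)=2\,(a_1-a_0)\cdot(a_2-a_0)$, so that $\tilde p$ vanishes precisely when $a_1-a_0$ and $a_2-a_0$ are orthogonal (including the degenerate cases where two of the points coincide). For $1$-avoidability, given an injective (definable) $g:\RR \to \RR^2$ and distinct $e_0,e_1,e_2 \in (0,1)$, it suffices to find three distinct $Q_0,Q_1,Q_2 \in g(\RR)$ with $(Q_1-Q_0) \cdot (Q_2-Q_0)\neq 0$ and then set $\alpha(e_i)=g^{-1}(Q_i)$. If no such triple existed then, fixing any $Q_0,Q_1 \in g(\RR)$, every other point of $g(\RR)$ would be on both the perpendicular to $Q_1-Q_0$ through $Q_0$ and the perpendicular to $Q_0-Q_1$ through $Q_1$; these are distinct parallel lines, hence disjoint, forcing $|g(\RR)|\leq 2$ and contradicting the injectivity of $g$ on an uncountable domain.

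For the failure of $2$-avoidability, I would exhibit the identity $g(x,y)=(x,y)$, which is trivially definable and one-one in each coordinate, together with the grid-aligned triple $e_0=(1/4,1/4)$, $e_1=(1/4,1/2)$, $e_2=(1/2,1/4)$. For any coordinately induced $\alpha(x,y)=(\alpha_0(x),\alpha_1(y))$ the two difference vectors based at $g\alpha(e_0)$ are $(0,\,\alpha_1(1/2)-\alpha_1(1/4))$ and $(\alpha_0(1/2)-\alpha_0(1/4),\,0)$, which are manifestly orthogonal; hence $\tilde p(g\alpha(e_0),g\alpha(e_1),g\alpha(e_2))=0$ regardless of the choice of $\alpha$.

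I do not anticipate serious obstacles. The second step is essentially forced on us by the product structure of $\alpha$ acting on an axis-aligned triple, and the first step is a direct consequence of the polar identity together with the elementary fact that two distinct parallel lines are disjoint; this last remark covers even the degenerate configuration in which $g(\RR)$ is contained in a single line.
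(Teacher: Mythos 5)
Your proof is correct, but it takes a genuinely different route from the paper. The paper does not really prove this theorem: it recalls Erd\H{o}s and Komj\'{a}th's combinatorial result (CH holds iff the plane admits a countable coloring with no monochromatic right-angled triangle) and merely observes that, via the same polar identity you use, avoiding $\tilde p$ is literally the same as avoiding monochromatic right triangles. You instead derive the equivalence from Schmerl's Theorem \ref{theorem: Schmerl1} together with the two facts that $\tilde p$ is $1$-avoidable and not $2$-avoidable --- which is exactly the content of the paper's Lemma \ref{lemma: avoidptilde}, and exactly the strategy the paper itself deploys for the $\Sigma^1_2$ version in its final section. Your proofs of those two facts are sound and differ mildly from the paper's: for $1$-avoidability the paper collapses $e_2$ onto $e_1$ so that two of the three image points coincide and $\tilde p$ evaluates to $2\lVert g\alpha(e_1)-g\alpha(e_0)\rVert^2>0$, whereas you locate three genuinely distinct points of $g(\RR)$ with no right angle at the base point via the parallel-lines argument; for non-$2$-avoidability your axis-aligned triple inside $(0,1)^2$ is if anything cleaner than the paper's choice $(0,0),(1,0),(0,1)$, which sits on the boundary of the prescribed box. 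One caveat worth flagging: your derivation is non-circular only because the two implications of Theorem \ref{theorem: Schmerl1} that you invoke are established by Schmerl independently of the Erd\H{o}s--Komj\'{a}th equivalence (the paper uses that equivalence only to obtain their \emph{converses}, Proposition \ref{proposition: viceversaSchmerl}); since that is indeed the case, your argument trades one external citation for another, with the merit of exhibiting the classical statement as a corollary of the same avoidance machinery used for the effective one.
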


In \cite{Schmerl}, Schmerl also proved that $\tilde{p}(x_0, x_1, x_2)$ is $1$-avoidable and it is not $2$-avoidable. Since this result is crucial to prove our goal, let us recall the proof.

\begin{lemma}[Schmerl]\label{lemma: avoidptilde}
The $(3,2)$-ary polynomial $\tilde{p}(x_0,x_1,x_2)$ is $1$-avoidable and it is not $2$-avoidable.
\end{lemma}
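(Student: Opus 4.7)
The plan is to handle the two halves of the lemma independently, with quite different arguments.

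For the \emph{non}-$2$-avoidable direction, I would take $g:\RR^2\to\RR^2$ to be the identity (which is manifestly definable and one-one in each coordinate), together with three axis-aligned points such as $e_0=(1/3,1/3)$, $e_1=(2/3,1/3)$, $e_2=(1/3,2/3)$ in $(0,1)^2$, forming a right angle at $e_0$. Rewriting $\tilde p$ in the polarised form $\tilde p(x_0,x_1,x_2)=2(x_1-x_0)\cdot(x_2-x_0)$, for any coordinately induced $\alpha=(\alpha_0,\alpha_1)$ the two relevant difference vectors are
$g\alpha(e_1)-g\alpha(e_0)=(\alpha_0(2/3)-\alpha_0(1/3),\,0)$ and $g\alpha(e_2)-g\alpha(e_0)=(0,\,\alpha_1(2/3)-\alpha_1(1/3))$,
which are axis-aligned, hence orthogonal; so their dot product, and thus $\tilde p(g\alpha(e_0),g\alpha(e_1),g\alpha(e_2))$, is zero for every such $\alpha$. (If one of $\alpha_0,\alpha_1$ collapses a pair of the relevant inputs, two of the three image points coincide and $\tilde p$ is $0$ trivially.) This witnesses that $\tilde p$ is not $2$-avoidable.

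For $1$-avoidability, fix a definable one-one $g:\RR\to\RR^2$ and distinct $e_0,e_1,e_2\in(0,1)$. Since for $m=1$ every function $\alpha:\RR\to\RR$ is coordinately induced, it suffices to exhibit reals $r_0,r_1,r_2$ with the images $g(r_0),g(r_1),g(r_2)$ pairwise distinct and $(g(r_1)-g(r_0))\cdot(g(r_2)-g(r_0))\ne 0$, and then set $\alpha(e_i)=r_i$ (extending arbitrarily). Split on the geometry of $g(\RR)$. If $g(\RR)$ lies on a single line, choose any three distinct values in $g(\RR)$; the two differences are then nonzero parallel vectors, so their dot product is nonzero. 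Otherwise $g(\RR)$ contains three non-collinear points $A,B,C$; the triangle $ABC$ has at most one right-angle vertex, so at least two of $A,B,C$ are non-right vertices. Picking such a vertex $V$, assign $g(r_0)=V$ and $g(r_1),g(r_2)$ to be the remaining two points (in either order): then the angle at $g(r_0)$ is not right, hence $\tilde p\ne 0$.

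The only conceptual hurdle is spotting, for the non-$2$-avoidable direction, that coordinate-wise maps of $\RR^2$ preserve the perpendicularity of axis-aligned vectors, which forces the identity map to be a universal witness; the $1$-avoidability half is then routine planar geometry once one uses the full arbitrariness of $\alpha$ in the one-coordinate case.
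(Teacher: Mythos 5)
Your proposal is correct, and the non-$2$-avoidability half coincides with the paper's argument (identity $g$, an axis-aligned right angle, and the observation that coordinately induced maps send it to a possibly degenerate right angle; your polarisation $\tilde p(x_0,x_1,x_2)=2(x_1-x_0)\cdot(x_2-x_0)$ just makes the paper's ``eventually degenerate'' remark explicit). The $1$-avoidability half, however, takes a genuinely different route. The paper exploits the fact that the definition of $m$-avoidability only requires the $e_i$ to be distinct, not their images: it takes the coordinately induced $\alpha$ that collapses $e_2$ onto $e_1$ and fixes everything else, so that $x_1:=g\alpha(e_1)=g\alpha(e_2)$ while $x_0:=g\alpha(e_0)\neq x_1$ by injectivity of $g$, whence $\tilde p(x_0,x_1,x_1)=2\lVert x_1-x_0\rVert^2\neq 0$ with no case analysis at all. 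You instead insist on three distinct image points and split on whether $g(\RR)$ is contained in a line (three distinct collinear points give nonzero parallel difference vectors, hence nonzero dot product) or contains a genuine triangle (which has at most one right-angle vertex, so base the angle at a non-right vertex). Both arguments are sound; the paper's is shorter and transfers verbatim to the $(1,\Sigma^1_2)$-avoidability statement needed later, since the collapsing $\alpha$ is obviously $\Sigma^1_2$ in the parameters $e_1,e_2$, whereas your $\alpha$ would additionally need the chosen witnesses $r_0,r_1,r_2$ as a real parameter $r'$ --- still permitted by the definition of $(m,\Sigma^1_2)$-avoidability, but a point you would have to record. Your version has the minor virtue of not relying on the convention that degenerate configurations count toward $m$-avoidability.
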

\begin{proof}
First of all we prove that it is $1$-avoidable. Indeed given $g: \RR \to \RR^2$ and $e_0 \neq e_1 \neq e_2\in \RR$ define $\alpha:\RR \to \RR$ as follows:
\[
	\alpha(x)= y \iff (x=e_2 \wedge y=e_1)\ \vee \ (x \neq e_2 \wedge y=x).
\]
Hence $\alpha(e_0)=e_0$, $\alpha(e_1)=e_1$ and $\alpha(e_2)=e_1$, therefore $g(\alpha(e_0))\neq g(\alpha(e_1)) = g(\alpha(e_2))$ since $g$ is one-one in each coordinate. This yields
$
	p(g(\alpha(e_0)), g(\alpha(e_1)), g(\alpha(e_2)))\neq 0.
$

To prove that it is not $2$-avoidable let $g:\RR^2 \to \RR^2$ be the identity function and put $e_0 = (0,0)$, $e_1=(1,0)$ and $e_2= (0,1)$. They form a right-angled triangle. Let $\alpha: \RR^2 \to \RR^2$ be any coordinately induced function, hence $\alpha(e_0)$, $\alpha(e_1)$ and $\alpha(e_2)$ form a right-angled triangle, eventually degenerate. So 
\[
	\tilde{p}(g(\alpha(e_0)), g(\alpha(e_1)), g(\alpha(e_2)))=\tilde{p}(\alpha(e_0), \alpha(e_1), \alpha(e_2))= 0. \qedhere
\] 
\end{proof}

\begin{proposition}\label{proposition: viceversaSchmerl}~
\begin{enumerate}
\item If any avoidable polynomial is $2$-avoidable then $\neg$CH holds. 
\item If any $1$-avoidable polynomial is avoidable then CH holds. 
\end{enumerate}
\end{proposition}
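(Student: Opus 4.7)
The plan is to deduce both implications as immediate corollaries of the Erd\H{o}s-Komj\'ath equivalence together with Lemma \ref{lemma: avoidptilde}, by applying them to the specific polynomial $\tilde{p}(x_0, x_1, x_2)$ defined in Notation \ref{notation}. The Erd\H{o}s-Komj\'ath theorem characterizes CH as equivalent to the avoidability of $\tilde{p}$, and Lemma \ref{lemma: avoidptilde} tells us that $\tilde{p}$ is $1$-avoidable but not $2$-avoidable. Thus $\tilde{p}$ is the canonical witness separating the various avoidance notions, and both parts of the proposition fall out by contraposition.

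For part (1), I will argue contrapositively. Suppose, toward a contradiction, that every avoidable polynomial is $2$-avoidable and that CH holds. Applying the Erd\H{o}s-Komj\'ath equivalence, CH implies that $\tilde{p}(x_0, x_1, x_2)$ is avoidable. The hypothesis then forces $\tilde{p}$ to be $2$-avoidable, contradicting Lemma \ref{lemma: avoidptilde}. Hence $\neg$CH must hold.

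For part (2), assume every $1$-avoidable polynomial is avoidable. By Lemma \ref{lemma: avoidptilde}, $\tilde{p}(x_0, x_1, x_2)$ is $1$-avoidable, so by the hypothesis it is avoidable. Applying the Erd\H{o}s-Komj\'ath equivalence in the converse direction yields CH.

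Since Lemma \ref{lemma: avoidptilde} and the Erd\H{o}s-Komj\'ath theorem are already stated (and the lemma is proved in the excerpt), there is no real obstacle; the only thing to be careful about is to phrase the argument so that the reader sees that $\tilde{p}$ plays the role of a universal witness, and that the lemma is precisely calibrated to separate \emph{$1$-avoidable}, \emph{avoidable}, and \emph{$2$-avoidable}, which is exactly what makes these two contrapositive deductions work in tandem with Schmerl's Theorem \ref{theorem: Schmerl1}.
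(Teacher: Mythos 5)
Your argument is correct and is essentially identical to the paper's own proof: both parts use $\tilde{p}(x_0,x_1,x_2)$ as the witness, combining the Erd\H{o}s--Komj\'ath equivalence with Lemma \ref{lemma: avoidptilde} exactly as the paper does. No changes needed.
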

\begin{proof}~
\begin{enumerate}
\item Assume that CH holds, then by Erd\H{o}s and Komj\'{a}th's equivalence $\tilde{p}(x_0,x_1,x_2)$ is avoidable. Then, by hypothesis it is $2$-avoidable. Contradiction by Lemma \ref{lemma: avoidptilde}.
\item By Lemma \ref{lemma: avoidptilde}, $\tilde{p}(x_0,x_1,x_2)$ is $1$-avoidable. Then, by hypothesis, it is avoidable. Therefore, again by Erd\H{o}s-Komj\'{a}th equivalence, CH holds. \qedhere
\end{enumerate}
\end{proof}

\subsection{Auxiliary results}\label{subsection: aux}

Recall that our goal is to provide $\Sigma^1_2$ definable counterparts for both the results by Schmerl and Erd\H{o}s and Komj\'{a}th's equivalence. To this end we need some technical facts. The first one is the $\Sigma^1_2$ version of a lemma used to prove Theorem \ref{theorem: Schmerl2}.2. 

\begin{lemma}\label{lemma: G}
Assume that $\RR \subseteq L$, then there is a $\Sigma^1_2$ function $G:\RR^{{<}\omega} \to \omega$ such that whenever $a,\ b \in \RR^{{<}\omega}$
\begin{itemize}
\item if $G(a)=G(b)$ then $|a|=|b|$;
\item if $G(a)=G(b)$ and $\max(a)=\max(b)$ then $a=b$.
\end{itemize}
\end{lemma}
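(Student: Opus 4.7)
The plan is to encode any non-empty tuple $a \in \RR^{<\omega}$ by the positions of its entries in the enumeration of the predecessors of its $\prec$-maximum. Fix a $\Delta^1_2$ strong well-ordering $\prec$ of $\RR$, available since $\RR \subseteq L$, and a recursive bijection $\ap{\cdot}:\NN^{<\omega}\to\NN$. Given $a = (a_0,\dots,a_{n-1}) \in \RR^n$ with $n \geq 1$, set $t := \max_\prec(a)$ and $v := \IS^*(t)$. For each $i<n$ let
\[
k_i = \begin{cases} 0 & \text{if } a_i = t, \\ j+1 & \text{if } a_i = v(j), \end{cases}
\]
which is well defined because every entry of $a$ distinct from $t$ lies in $t_\prec$, and $v$ enumerates $t_\prec$ injectively. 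Put $G(a) := \ap{k_0,\dots,k_{n-1}}$, and $G(\emptyset) := \ap{\emptyset}$.

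Both required properties then follow at once. The recursive pairing $\ap{\cdot}$ lets one read off the length of the coded tuple from its code, so $G(a) = G(b)$ forces $|a| = |b|$. If in addition $\max_\prec(a) = \max_\prec(b) = t$, then $v := \IS^*(t)$ is common to both, and the rule \vgt{$a_i = t$ when $k_i = 0$, $a_i = v(k_i - 1)$ when $k_i \geq 1$} reconstructs each entry from its code, whence $a = b$.

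It remains to check that $G$ is $\Sigma^1_2$. Its graph is expressed by
\[
G(a) = m \iff \bigl(a = \emptyset \wedge m = \ap{\emptyset}\bigr) \vee \exists t \in \RR\ \exists v \in \RR^{\leq \omega}\ \exists k \in \NN^{<\omega}\ \varphi(a,t,v,k,m),
\]
where $\varphi$ asserts that $|k| = |a| \geq 1$, that $m = \ap{k}$, that $t$ occurs as an entry of $a$ and no entry of $a$ is $\succ t$, that $\IS^*(t) = v$, and that for each $i < |a|$ either ($k_i = 0$ and $a_i = t$) or ($k_i \geq 1$ and $a_i = v(k_i - 1)$). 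Every atomic clause uses only $\prec$ and $\IS^*$, which are $\Delta^1_2$, together with equality, and every quantification over $i<|a|$ is finite, while the existential blocks range over recursively presented Polish spaces; hence $G$ is $\Sigma^1_2$. The single substantive point on which the argument rests is that $\IS^*$ makes the whole initial segment $t_\prec$ available uniformly as a single $\Delta^1_2$ object, so that coding a tuple by positions relative to its $\prec$-maximum is itself $\Sigma^1_2$; everything else is bookkeeping.
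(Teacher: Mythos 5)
Your proof is correct and follows essentially the same route as the paper: both use the $\Delta^1_2$ strong well-ordering and $\IS^*$ to index each entry of $a$ by its position among the $\prec$-predecessors of $\max_\prec(a)$, and then code the resulting finite sequence of naturals by a recursive injection $\NN^{<\omega}\to\omega$. The only (harmless) difference is that you record the full position tuple, including a reserved code for occurrences of the maximum, which makes the length and the reconstruction of $a$ as a sequence slightly more explicit than the paper's $G(a)=i(F_{\max a}[a\setminus\{\max a\}])$.
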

\begin{proof}
Let $i: \omega^{<\omega} \to \omega$ be any $\Sigma^1_2$ injection. For each $x \in \RR$, let $F_x: x \to \omega$ be the $\Sigma^1_2$ injection defined by: 
\[
	F_x(y) = n \iff \exists v \in \RR^{\leq\omega} (\IS^*(x)=v \wedge v(n)=y).
\]
By definition $F_x(y) = n$ holds if $y$ is the $n$-th predecessor of $x$. Then consider $a \in \RR^{{<}\omega}$. If $a = \emptyset$ put $G(a)=0$ otherwise let $x=\max a$,  $G(a)=i(F_{x}[a \setminus \bp{x}])$. 
\end{proof}

The second fact we need is the following lemma, for the proof we refer to \cite{SchmerlCountable}. 

\begin{lemma}[Schmerl]\label{lemma: beta}
Let $T$ be a transcendence basis for $\RR$ over $\CQQ$. Let $l$ be a natural number, $q_i \in \QQ$ for any $i\in 2l$,  $D = (q_0,q_1)\times \dots\times(q_{2l-2},q_{2l-1})$ and let
$
	h: D^k \to \RR
$ 
be an $\CQQ$-definable analytical function. Suppose that for any $j \in k$, $\overline{t}_j=(t_{0,j}, \dots, t_{l-1,j}) \in T^l \cap D$ and $h(\overline{t}_0,\dots, \overline{t}_{k-1})=0$. If $\beta:\bp{t_{i,j}: i \in l, j \in k}\to \RR$ is such that $\beta''\overline{t}_j=(\beta(t_{0,j}), \dots, \beta(t_{l-1,j}))\in D$ for any $j \in k$, then $h(\beta''\overline{t}_0 ,\dots, \beta''\overline{t}_{k-1})=0$.
\end{lemma}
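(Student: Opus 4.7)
The plan is to convert the statement into one about algebraic independence by regarding $h$ as a function of the distinct values occurring among the $t_{i,j}$. First I would let $s_1, \dots, s_m$ enumerate the distinct elements of $\bp{t_{i,j}: i \in l, j \in k}$. Since $T$ is a transcendence basis of $\RR$ over $\CQQ$, the $s_i$ are algebraically independent over $\CQQ$. Rewriting $h(\overline{t}_0, \dots, \overline{t}_{k-1})$ as $\tilde{h}(s_1, \dots, s_m)$, where $\tilde{h}$ is again $\CQQ$-definable analytical on a product $D' \subseteq \RR^m$ of open rational intervals chosen to contain both $(s_1, \dots, s_m)$ and $(\beta(s_1), \dots, \beta(s_m))$, the hypothesis becomes $\tilde{h}(s_1, \dots, s_m) = 0$ and the desired conclusion becomes $\tilde{h}(\beta(s_1), \dots, \beta(s_m)) = 0$.

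The core step is to show that any $\CQQ$-definable analytical function vanishing at an algebraically independent tuple over $\CQQ$ must be identically zero on its domain. I would proceed by induction on $m$: fixing $s_1, \dots, s_{m-1}$ and viewing $\tilde{h}(s_1, \dots, s_{m-1}, x_m)$ as a $\CQQ(s_1, \dots, s_{m-1})$-definable analytical function of the single variable $x_m$, the fact that $s_m$ is transcendental over $\CQQ(s_1, \dots, s_{m-1})$ forces this one-variable function to vanish identically, since otherwise an isolated zero at $s_m$ would produce a non-trivial algebraic relation for $s_m$ over $\CQQ(s_1, \dots, s_{m-1})$. Unwinding the induction, $\tilde{h} \equiv 0$ on $D'$; in particular $\tilde{h}(\beta(s_1), \dots, \beta(s_m)) = 0$, which is what is wanted.

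The main obstacle will be making precise the notion of \emph{$\CQQ$-definable analytical} so that the induction step actually goes through: one needs the class to be closed under fixing some variables to algebraically independent values, and one needs that within this class an isolated analytic zero at a transcendental point really does yield a genuine algebraic relation. When the functions in question are polynomials over $\CQQ$, everything collapses to the familiar fact that a polynomial over $\CQQ$ vanishing at an algebraically independent tuple over $\CQQ$ is identically zero; for Schmerl's broader notion of analyticity, the delicate bookkeeping on how the field of coefficients enlarges at each induction step is precisely why the full argument is deferred to \cite{SchmerlCountable}.
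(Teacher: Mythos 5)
The paper itself does not prove this lemma: it is quoted from Schmerl and the proof is explicitly deferred to \cite{SchmerlCountable}, so there is no in-paper argument to compare against. Judged on its own, your reduction is the right one, and it matches how the lemma is actually used in Theorem \ref{theorem: 1avoidance}: there $h$ is a polynomial composed with implicitly defined algebraic functions, so \vgt{$\CQQ$-definable analytical} concretely means semialgebraic over $\CQQ$ and real-analytic. Passing to the distinct values $s_1,\dots,s_m$ (which are algebraically independent over $\CQQ$ because they are distinct members of the transcendence basis $T$) and reducing to the statement that a $\CQQ$-definable analytic function on a connected rational box vanishing at such a tuple vanishes identically is exactly the right move; your one-variable case is also sound, since the isolated points of a $\CQQ$-semialgebraic subset of an interval are algebraic, so a transcendental zero must sit inside a subinterval of zeros, and analyticity plus connectedness of the domain does the rest.

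The genuine gap is the phrase \vgt{unwinding the induction}. Fixing $x_1=s_1,\dots,x_{m-1}=s_{m-1}$ only tells you that the one-dimensional slice $x_m\mapsto\tilde h(s_1,\dots,s_{m-1},x_m)$ vanishes identically; you cannot then feed the values $\beta(s_i)$ back into the earlier coordinates, because $\beta$ is an arbitrary map and $\beta(s_i)$ need not be transcendental over anything (it could be algebraic, or equal to another $s_j$). Since the target point $(\beta(s_1),\dots,\beta(s_m))$ is an essentially arbitrary point of the box, you really need $\tilde h\equiv 0$ on all of $D'$, and the inductive step as written does not deliver that. Two ways to close it: (i) from the slice result deduce $\tilde h(s_1,\dots,s_{m-1},q)=0$ for every rational $q$ in the last interval, apply the $(m-1)$-variable statement to the $\CQQ$-definable function $\tilde h(\,\cdot\,,q)$ --- noting that $(s_1,\dots,s_{m-1})$ is still algebraically independent over $\CQQ=\CQQ(q)$ --- to get $\tilde h(\,\cdot\,,q)\equiv 0$, and conclude by continuity and density of $\QQ$; or (ii) avoid the induction altogether: the zero set of $\tilde h$ is $\CQQ$-semialgebraic and contains a point of transcendence degree $m$ over $\CQQ$, hence has dimension $m$ and nonempty interior, and an analytic function vanishing on a nonempty open subset of the connected box $D'$ vanishes identically. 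With either repair your argument is complete for the semialgebraic reading of \vgt{definable}, which is the only one the paper needs.
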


The coloring which witnesses the $\Sigma^1_2$-avoidance of the given polynomial in the original proof of Theorem \ref{theorem: Schmerl2}.2 is defined by using the Implicit Function Theorem. Therefore, to prove the effective version, we also need a basic fact about uniformly continuous functions. Recall that a function $f$ is uniformly continuous if 
\[
	\forall \varepsilon >0 \exists \delta >0 \forall x, y \in \dom(f) (|x-y |< \delta \implies |f(x)-f(y)|<\varepsilon).
\]
\begin{definition}\label{def:Quc}
Let $a$, $b$ be rational numbers and let $l$ be a natural number. Given a function $f : (a,b)^l\cap\QQ^l \to \RR$ define: 
\begin{multline*}
\psi(f, (a,b)^l) \iff \forall \varepsilon \in \QQ^+ \exists \delta \in \QQ^+ \forall q_1, q_2 \in (a,b)^l \cap \QQ^l \\
(|q_1 -q_2| < \delta \implies |f(q_1)-f(q_2)| < \varepsilon).
\end{multline*}
We say that $f : (a,b)^l\cap\QQ^l \to \RR$ is \emph{$\QQ$-uniformly continuous} if $\psi(f,(a,b)^l)$ holds.
\end{definition}

\begin{remark}\label{Remark: UnifDiff}
If $f:(a,b)^l\to\RR$ is uniformly continuous then $f\res (a,b)^l \cap \QQ^l$ is $\QQ$-uniformly continuous.
\end{remark}

\begin{lemma}\label{Lemma: UnifDiff}
Let $a, b \in \QQ$, $l$ be a natural number and let $f: (a,b)^l \cap \QQ^l \to \RR$ be $\QQ$-uniformly continuous.
\begin{itemize}
\item There exists a $\Delta^1_1$ uniformly continuous function $f^*:(a,b)^l \to \RR$  which extends $f$.
\item Moreover if $p(x,y)$ is a polynomial such that $\forall q \in (a,b)^l\cap \QQ^l (p(q,f(q))=0)$, then
\[
	\forall r \in (a,b)^l  \ (p(r,f^*(r))=0 ).
\]
\end{itemize}
\end{lemma}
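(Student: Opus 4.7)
The plan is to build $f^*$ by the classical density-and-completeness construction. First I would argue that for each $r \in (a,b)^l$, any sequence $\cp{q_n}_{n \in \NN}$ in $(a,b)^l \cap \QQ^l$ with $q_n \to r$ has the property that $\cp{f(q_n)}_{n \in \NN}$ is Cauchy in $\RR$: this uses $\psi(f,(a,b)^l)$ applied to the pair $q_n, q_m$ for sufficiently large $n,m$. Completeness of $\RR$ then gives a limit, and a second application of $\QQ$-uniform continuity, this time comparing two candidate sequences converging to $r$, shows that the limit does not depend on the chosen sequence. I would define $f^*(r)$ to be this common limit, so $f^*$ extends $f$ (take the constant sequence whenever $r\in\QQ^l$), and uniform continuity of $f^*$ on $(a,b)^l$ follows by picking rational approximations to any $x,y\in(a,b)^l$ and passing to the limit in the $\varepsilon$-$\delta$ condition supplied by $\psi(f,(a,b)^l)$.

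To see that $f^*$ is $\Delta^1_1$ (in $f$), I would observe that its graph admits the arithmetic characterisation
\[
	f^*(r)=s \iff \forall \varepsilon \in \QQ^+ \exists \delta \in \QQ^+ \forall q \in (a,b)^l \cap \QQ^l \cp{\vp{q-r}<\delta \implies \vp{f(q)-s}<\varepsilon}.
\]
Since $q$, $\varepsilon$ and $\delta$ all range over countable recursively enumerated sets, and $f$ itself codes a real, the inner matrix is arithmetic in $r,s,f$, so the graph is $\Delta^1_1$: the same formula functions both as a $\Pi^0_3$ definition and, by unpacking the negation via the guaranteed existence of some $s$, as a $\Sigma^0_3$ definition after identifying $s$ through finitely many rational approximations to $f^*(r)$. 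Equivalence with the sequential construction above is immediate from $\psi(f,(a,b)^l)$: given $\varepsilon$, the chosen $\delta$ pins $f(q_n)$ to within $\varepsilon$ of $f^*(r)$ once $q_n$ is $\delta$-close to $r$.

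For the second bullet I would invoke continuity: since $p$ is a polynomial and $f^*$ is continuous, the map $r \mapsto p\cp{r,f^*(r)}$ is continuous on $(a,b)^l$. By hypothesis it vanishes on $(a,b)^l \cap \QQ^l$, where $f^* = f$, and this set is dense in $(a,b)^l$, so the map vanishes identically. The main obstacle, such as it is, will be the bookkeeping of the arithmetic formula; no analytic subtlety goes beyond the textbook completion of a uniformly continuous map on a dense subspace, and the real effort lies in checking that every quantifier in the definition of $f^*$ ranges over countable data so the $\Delta^1_1$ bound is honest.
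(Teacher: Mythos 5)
Your proposal is correct and follows essentially the same route as the paper: extend $f$ to $f^*$ by taking limits along rational sequences (Cauchy via $\QQ$-uniform continuity), check definability of the graph, and get the polynomial identity by continuity and density of $(a,b)^l\cap\QQ^l$. The only cosmetic difference is that you certify $\Delta^1_1$-ness with an arithmetic $\forall\varepsilon\exists\delta\forall q$ formula over rationals, whereas the paper quantifies existentially over rational sequences to get a $\Sigma^1_1$ (hence $\Delta^1_1$) graph; both are fine, and yours is if anything the tidier bound.
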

\begin{proof}
 Define $f^*:(a,b)^l \to \RR$ as follows. Let $r \in (a,b)^l$ and let 
\[
	\bp{q_n\in (a,b)^l \cap \QQ^l: n \in \omega}
\] 
be such that $\lim_{n \to \infty}q_n = r.$
We claim that $\bp{f(q_n): n \in \omega}$ is a Cauchy's sequence. Let $\varepsilon > 0$, we want to prove that there exists $N$ such that for any $n, m > N (|f(q_n)-f(q_m)|<\varepsilon)$. By hypothesis we have 
\[
	\exists \delta \in \QQ^+ (\forall q_1, q_2\in (a,b)^l \cap \QQ^l  (|q_1 - q_2| < \delta \implies |f(q_1)-f(q_2)|<\varepsilon)).
\]
Since $\lim_{n \to \infty}q_n = r$, there exists $N \in \NN$ such that for any $n, m > N (|q_n-q_m|<\delta)$. Therefore $|f(q_n)-f(q_m)|< \varepsilon$ and so $\lim_{n \to \infty}f(q_n) \in \RR$. Put $f^*(r)= \lim_{n \to \infty}f(q_n).$
Observe that $f^*(r)=l$ is $\Sigma^1_1$ (and so $\Delta^1_1$), indeed:
\[
	f^*(r)=l \iff \exists q_n \in ((a,b)^l \cap \QQ^l)^\omega (\lim_{n\to\infty}(q_n)=r \implies \lim_{n\to \infty}f(q_n)= l),
\]
where
\[
	\lim_{n\to\infty}(x_n)=y \iff \forall \varepsilon \in \QQ^+ \exists n\in \NN \forall m > n(|q_m-l|<\varepsilon).  
\]

For the second part, let $r \in (a,b)^l$ and let $\bp{q_n \in (a,b)^l \cap \QQ^l: n\in \omega}$ be such that $\lim_{n \to \infty} q_n = r$.
Then 
\[
 	p(r, f^*(r))=p(\lim_{n\to \infty}q_n, \lim_{n\to \infty}f(q_n)) = \lim_{n\to \infty}p(q_n, f(q_n))=0. \qedhere
\]
\end{proof}

\subsection{Definable counterparts}

To prove the corresponding $\Sigma^1_2$ equivalences with $\RR \subseteq L$, we first need to consider the counterparts of the definitions of avoidance. The definition of $\Sigma^1_2$-avoidance directly follows by the one of avoidance, while we have to be more careful in defining the $\Sigma^1_2$ version of $m$-avoidance. 

\begin{definition}
A $(k,n)$-ary polynomial $p(x_0,\dots, x_{k-1})$ is $\Sigma^1_2$-avoidable if there exists a $\Sigma^1_2$ coloring which avoids it.
\end{definition}

\begin{definition}
A $(k,n)$-ary polynomial $p(x_0,\dots, x_{k-1})$ is $(m,\Sigma^1_2)$-avoidable if  for each $r \in \RR\cap L $, for each $\Sigma^1_2(r)$ function $g:\RR^m \to \RR^n$ which is one-one in each coordinate and for distinct $e_0, \dots, e_{k-1}\in \RR^m$ there is $r'\in \RR$ and a coordinately induced $\alpha:\RR^m \to \RR^m$ which is  $\Sigma^1_2(r')$ and such that
$
	p(g\alpha(e_0), \dots, g\alpha(e_{k-1}))\neq 0.
$
\end{definition}

Observe that we permit $g$ to be defined with a parameter in $\RR \cap L$. To justify this definition recall that in Schmerl's definition $g$ had to be definable.

\begin{theorem}\label{theorem: 2avoidance}
	If $\RR \nsubseteq L$ then every $\Sigma^1_2$-avoidable polynomial is $(2,\Sigma^1_2)$-avoidable.
\end{theorem}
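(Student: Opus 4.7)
My plan is to adapt Schmerl's argument (Theorem~\ref{theorem: Schmerl1}.1) to the $\Sigma^1_2$ setting, using the T\"{o}rnquist--Weiss $\Sigma^1_2$ version of the Komj\'{a}th--Totik theorem (Proposition~\ref{theorem: CD}.1) as the key external ingredient. Fix a $\Sigma^1_2$ coloring $\chi:\RR^n\to\omega$ that avoids $p$, a parameter $r\in\RR\cap L$, a $\Sigma^1_2(r)$ function $g:\RR^2\to\RR^n$ which is one-one in each coordinate, and distinct $e_0,\dots,e_{k-1}\in\RR^2$. The first step is to form the composite coloring $f(x,y)=\chi(g(x,y))$, which is $\Sigma^1_2(r)$. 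Since $\RR\nsubseteq L$, Proposition~\ref{theorem: CD}.1 applies: for any prescribed $N\in\omega$ I obtain $C',D'\subseteq\RR$ with $|C'|=|D'|=N$ and $f\res C'\times D'$ monochromatic.

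The combinatorial heart of the argument, and the main obstacle, is to find inside $C'\times D'$ a sub-rectangle $C\times D$ of size $k\times k$ on which $g$ is actually \emph{injective}. Note that ``one-one in each coordinate'' does \emph{not} imply global injectivity of $g$, so a priori the $k$ points $g(\alpha(e_i))$ might coincide and the avoidance hypothesis on $\chi$ (which only rules out monochromatic \emph{distinct} tuples) could fail to apply. However the one-one-in-each-coordinate hypothesis tightly controls collisions: for each fixed pair $c_1\neq c_2$ in $C'$ the relation $\{(d_1,d_2)\in D'\times D':g(c_1,d_1)=g(c_2,d_2)\}$ is the graph of a partial injection of $D'$ into itself, so it has at most $|D'|$ elements. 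A direct averaging (equivalently, probabilistic) argument then shows that, for $N$ chosen polynomially large in $k$ (for instance $N=k^4+1$ suffices), among uniformly random $k$-subsets $C\subseteq C'$ and $D\subseteq D'$ the expected number of collision pairs lying in $C\times D$ is strictly less than one; hence an injective sub-rectangle $C\times D$ of the desired size $k$ exists.

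Once $C$ and $D$ are fixed, I choose any injection $\phi_0$ from $A=\{e_i^0:i<k\}$ into $C$ and any injection $\phi_1$ from $B=\{e_i^1:i<k\}$ into $D$, and extend them to maps $\alpha_0,\alpha_1:\RR\to\RR$ by sending every other real to a fixed default value. Then $\alpha(x,y)=(\alpha_0(x),\alpha_1(y))$ is coordinately induced, and the $\alpha(e_i)$ are $k$ pairwise distinct points of $C\times D$. By injectivity of $g$ on $C\times D$ the values $g(\alpha(e_i))$ are $k$ distinct elements of $\RR^n$; by monochromaticity of $f$ on $C'\times D'$ they all receive the same $\chi$-color. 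Avoidance of $p$ by $\chi$ therefore gives $p(g\alpha(e_0),\dots,g\alpha(e_{k-1}))\neq 0$, as required. Finally, letting $r'\in\RR$ code the finite data $A,B,C,D,\phi_0,\phi_1$ together with $r$, the map $\alpha$ becomes $\Delta^1_1(r')$, hence $\Sigma^1_2(r')$, completing the proof.
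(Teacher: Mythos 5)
Your proposal is correct, and its outer structure (compose $\chi\circ g$, invoke Proposition~\ref{theorem: CD}.1, build a coordinately induced $\alpha$ landing in the monochromatic rectangle) matches the paper. The combinatorial core is genuinely different, though, and your ``main obstacle'' is one the paper never encounters. The paper takes the monochromatic rectangle $C\times D$ of size exactly $k\times k$ and collapses \emph{all} the $e_j$ onto a single row, setting $\alpha(e_j)=(c_0,d_j)$: since these $k$ points agree in the first coordinate and differ only in the second, ``one-one in each coordinate'' applied to that single coordinate already gives $g(c_0,d_i)\neq g(c_0,d_j)$, so no global injectivity of $g$ on a rectangle is ever needed. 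Your route instead asks Proposition~\ref{theorem: CD}.1 for a large ($N=k^4+1$) monochromatic rectangle and runs an averaging argument (correct, as far as I can check: collisions between distinct columns form partial injections, so there are at most $N^3$ collision pairs, and a random $k\times k$ sub-rectangle avoids them all in expectation) to extract a sub-rectangle on which $g$ is injective, then places the $\alpha(e_i)$ at genuinely distinct rows and columns. This costs an extra page of combinatorics, but it does buy something: your injections $\phi_0,\phi_1$ are defined on the \emph{sets} of first and second coordinates of the $e_i$, so your $\alpha$ is well-defined and your $g(\alpha(e_i))$ are distinct even when distinct $e_i$ share a coordinate, whereas the paper's $\alpha_1$ (which sends $e_j(1)\mapsto d_j$) tacitly assumes the second coordinates $e_j(1)$ are pairwise distinct. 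So: correct, more elaborate than necessary for the paper's argument, but more robust in the degenerate case.
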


\begin{proof}
Let $k, n\in \NN$. Let $p(x_0,\dots, x_{k-1})$ be a $(k,n)$-ary polynomial which is $\Sigma^1_2$-avoidable polynomial. Then there exists a $\Sigma^1_2$ coloring $\chi: \RR^n \to \omega$ which avoids it. To prove that it is also $(2,\Sigma^1_2)$-avoidable let $r \in \RR \cap L$ and consider any $\Sigma^1_2(r)$ function $g: \RR^2 \to \RR^n$ (one-one in each coordinate) and any distinct $e_0,\dots, e_{k-1}\in \RR^2$. Then $\chi \circ g: \RR^2 \to \omega$ is also $\Sigma^1_2(r)$. Since $\RR \nsubseteq L$, then by applying Proposition \ref{theorem: CD}.1, there exist $C=\bp{c_i : i\in k}$ and $D=\bp{d_i : i \in k}$ such that $\chi\circ g \res C \times D$ is monochromatic. Hence define $\alpha_0: \RR\to \RR$ and $\alpha_1: \RR\to \RR$ by:
\begin{align*}
	\alpha_0(x)=y &\iff \bigvee_{j\in k}(x = e_j(0) \ \wedge \ y= c_0)\ \vee \ (\bigwedge_{j\in k}(x \neq e_j(0) \ \wedge \ y=x)).\\
	\alpha_1(x)=y &\iff  \bigvee_{j\in k}(x = e_j(1) \ \wedge \ y= d_j)\ \vee \ (\bigwedge_{j\in k}(x \neq e_j(1) \ \wedge \ y=x)).
\end{align*}
Then consider $\alpha: \RR^2 \to \RR^2$ such that $\alpha((x,y))= (\alpha_0(x), \alpha_1(y))$. Since $\alpha(e_j)= (c_0, d_j) \in C \times D$, $\bp{\chi\circ g\circ\alpha(e_i): i\in k}$ is monochromatic. Moreover for any $i\neq j \in k$, $g(\alpha(e_i)) \neq g(\alpha(e_j))$ since $g$ is one-one in each coordinate. Then 
$
p(g(\alpha(e_0)),\dots,g(\alpha(e_{k-1})))\neq 0
$,
since $\chi$ avoids $p$.
\end{proof}

The proof of the $\Sigma^1_2$ version of Theorem \ref{theorem: Schmerl2}.2 requires a more elaborated argument, which uses all auxiliary facts listed in Subsection \ref{subsection: aux}.

\begin{theorem}\label{theorem: 1avoidance}
	If $\RR \subseteq L$ then every $(1,\Sigma^1_2)$-avoidable polynomial is $\Sigma^1_2$-avoidable.
\end{theorem}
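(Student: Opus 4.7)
The proof adapts Schmerl's argument for Theorem \ref{theorem: Schmerl2}.2 to the $\Sigma^1_2$ setting, replacing CH by $\RR\subseteq L$ and tracking the definability of every object, by means of the three auxiliary results from Subsection \ref{subsection: aux}: a $\Delta^1_2$ transcendence basis (Lemma \ref{lem:tran1}), the $\Sigma^1_2$ tuple-coder $G$ (Lemma \ref{lemma: G}), and the $\Delta^1_1$ uniformly continuous extension preserving polynomial identities (Lemma \ref{Lemma: UnifDiff}).

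Fix a $\Delta^1_2$-strong well-ordering $\prec$ of $\RR$, a $\Delta^1_2$ transcendence basis $T$, and the function $G$. For each $v\in\RR^n$, let $\bar t(v)=(t_0,\dots,t_{l-1})\in T^{<\omega}$ be the $\prec^*$-least increasing tuple such that every coordinate of $v$ belongs to $\alg_\RR\QQ(\bar t(v))$; for each $i<n$, let $P_i\in\QQ[X_0,\dots,X_{l-1},Y]$ be the polynomial of least code with $P_i(\bar t(v),v_i)=0$ and $\partial P_i/\partial Y(\bar t(v),v_i)\neq 0$; let $D=(q_0,q_1)\times\cdots\times(q_{2l-2},q_{2l-1})$ be the rational open box of least code containing $\bar t(v)$ on which the Implicit Function Theorem produces continuous branches $f_i:D\to\RR$ with $P_i(\bar x,f_i(\bar x))=0$ and $f_i(\bar t(v))=v_i$; finally let $b\in\omega$ encode the branch. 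Define
\[
\chi(v)=\langle l,P_0,\dots,P_{n-1},D,b,G(\bar t(v))\rangle\in\omega.
\]
This is $\Sigma^1_2$ since $T$ and $\prec$ are $\Delta^1_2$, $G$ is $\Sigma^1_2$, and all remaining data are natural-number-coded (mirroring the definability analysis of Theorem \ref{theorem: ZoliDefinable}).

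To show $\chi$ avoids $p$, assume toward contradiction that distinct $v_0,\dots,v_{k-1}\in\RR^n$ are $\chi$-monochromatic with $p(v_0,\dots,v_{k-1})=0$, and write the common data as $(l,P_0,\dots,P_{n-1},D,b,G_0)$. Let $f^*:D\to\RR^n$ be the coordinatewise $\Delta^1_1$ extension of the common branch given by Lemma \ref{Lemma: UnifDiff} (applied to $f\res D\cap\QQ^l$, which is $\QQ$-uniformly continuous), so $f^*(\bar t(v_i))=v_i$. The $G_0$-data determines a fixed increasing sequence $s_0<\cdots<s_{l-2}$ in $\omega$ with $t_{j,i}=\tau_{s_j}(e_i)$, where $e_i:=\max\bar t(v_i)$ and $\tau_m(y)$ denotes the $m$-th $\prec$-predecessor of $y$ ($\Sigma^1_2$ via $\IS^*$); by the second clause of Lemma \ref{lemma: G} the $e_i$ are pairwise distinct. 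Fix a $\Delta^1_1$ bijection $\phi:\RR\to(q_{2l-2},q_{2l-1})$ after shrinking $D$ so that $\pi(y):=(\tau_{s_0}(y),\dots,\tau_{s_{l-2}}(y),y)$ lies in $D$ for every $y$ in its range and $f^*\circ\pi\circ\phi$ is injective (possible by minimality of $\bar t(v)$ together with real-analyticity). Setting $g(x):=f^*(\pi(\phi(x)))$ and $\tilde e_i:=\phi^{-1}(e_i)$, we have that $g:\RR\to\RR^n$ is $\Sigma^1_2(r)$ for some $r\in\RR\cap L$, one-one in each coordinate, and $g(\tilde e_i)=v_i$. Applying $(1,\Sigma^1_2)$-avoidability of $p$ to $g$ and $(\tilde e_i)_{i<k}$ yields some $\alpha:\RR\to\RR$ with $p(g(\alpha(\tilde e_0)),\dots,g(\alpha(\tilde e_{k-1})))\neq 0$.

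To derive the contradiction, consider the $\CQQ$-definable analytic function
\[
h(\bar x_0,\dots,\bar x_{k-1}) := p(f^*(\bar x_0),\dots,f^*(\bar x_{k-1})) : D^k\to\RR,
\]
which vanishes at $(\bar t(v_i))_{i<k}$. Lemma \ref{lemma: beta} then yields $h(\beta''\bar t(v_0),\dots,\beta''\bar t(v_{k-1}))=0$ for every map $\beta$ on $\bigcup_i\bar t(v_i)$ with $\beta''\bar t(v_i)\in D$. Taking $\beta(e_i):=\phi(\alpha(\tilde e_i))$ and $\beta(t_{j,i}):=\tau_{s_j}(\phi(\alpha(\tilde e_i)))$ gives $\beta''\bar t(v_i)=\pi(\phi(\alpha(\tilde e_i)))$, hence $p(g(\alpha(\tilde e_0)),\dots,g(\alpha(\tilde e_{k-1})))=h(\beta''\bar t(v_0),\dots,\beta''\bar t(v_{k-1}))=0$, contradicting the choice of $\alpha$. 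The main obstacle is the well-definedness of such a $\beta$: it requires the coincidence pattern of $(\bar t(v_i))_{i<k}$ to refine that of the target tuples. Any cross-$i$ coincidence $t_{j,i}=t_{j',i'}$ is forced by the structure of $G$ to satisfy $j=j'$, so these are harmless; the delicate case is a coincidence of the form $t_{j,i}=e_{i'}$ for $i\neq i'$, which I expect to handle either by a careful case analysis or by a further refinement of $\pi$ (e.g.\ replacing its first $l-1$ slots by fixed rationals in the respective intervals of $D$ after shrinking $D$ so that no such cross-type coincidence can arise).
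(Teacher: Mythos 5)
Your overall scheme is the paper's: the same countable coloring (implicit-function branches over a $\Delta^1_2$ transcendence basis, tagged by the code $G(\bar t(v))$), the same auxiliary lemmas, and the same plan of turning a monochromatic zero of $p$ into a witness that $p$ is not $(1,\Sigma^1_2)$-avoidable via Lemma \ref{lemma: beta}. The gap is in your construction of the one-variable function $g$ and of the substitution $\beta$. You set $\pi(y)=(\tau_{s_0}(y),\dots,\tau_{s_{l-2}}(y),y)$, where $\tau_m(y)$ is the $m$-th $\prec$-predecessor of $y$. First, $\pi$ does not map into $D$: the $\prec$-predecessors of a real $y\in(q_{2l-2},q_{2l-1})$ are scattered through $\RR$ by the well-ordering, and no shrinking of $D$ can force $\tau_{s_j}(y)\in(q_{2j},q_{2j+1})$ for every $y$ in an interval; so $g=f^*\circ\pi\circ\phi$ is not even defined. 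Second, your $\beta(t_{j,i}):=\tau_{s_j}(\phi(\alpha(\tilde e_i)))$ fails to be well-defined precisely in the case you call harmless: if two tuples share a non-maximal element, i.e.\ $t_{j,i}=t_{j,i'}$ with $i\neq i'$ (the typical situation, e.g.\ when two of the $v$'s differ only in the top basis element), then $\beta$ must send that single element both to $\tau_{s_j}(\phi(\alpha(\tilde e_i)))$ and to $\tau_{s_j}(\phi(\alpha(\tilde e_{i'})))$, which differ for a general $\alpha$. The cross-type coincidence $t_{j,i}=e_{i'}$ that worries you is in fact excluded by the disjointness of the factors of $D$; it is the same-slot coincidence that breaks the construction.

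The repair is the one you relegate to a parenthesis, and it is essentially what the paper does: freeze the first $l-1$ slots. Define $g(x)=f^*(t_{0,0},\dots,t_{l-2,0},x)$ on $(q_{2l-2},q_{2l-1})$, using the constants taken from the first monochromatic tuple, put $e_j=t_{l-1,j}$ (pairwise distinct by Lemma \ref{lemma: G}), and set $\beta(t_{j,i})=t_{j,0}$ for $j<l-1$ and $\beta(t_{l-1,i})=\alpha(t_{l-1,i})$. Now $\beta$ is well-defined because its value depends only on which interval of $D$ its argument lies in, $g$ manifestly maps into $D$, and injectivity of $g$ comes from clauses (2) and (7) of the coloring (some $f_i$ is one-one in the last coordinate) rather than from an appeal to minimality plus real-analyticity. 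With this change the remainder of your argument goes through.
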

\begin{proof}
Let $T$ be a $\Delta^1_2$ transcendence basis of $\RR$ over $\CQQ$ provided by Lemma \ref{lem:tran1}. For any natural number $l$ and any $\overline{q} \in \QQ^{2l}$ define $\dom_{\overline{q}}= (q_0,q_1) \times \dots \times (q_{2l-2},q_{2l-1})$. 

Fix $n\in\NN$. For any $a=\ap{a_0 \dots, a_{n-1}}\in \RR^n$, $d \in \omega$, $\overline{q} \in \QQ^{2l}$ and $f_i \in \RR^{\dom_{\overline{q}} \cap \QQ^{l}}$\footnote{Note that $f_i$ can be coded as an element in $\RR^\omega$.} define $\chi(a,\ap{\dom_{\overline{q}}, f_0, \dots, f_{n-1}, d})$ if and only if there exist $(t_0, \dots, t_{l-1})\in \dom_ {\overline{q}}$ and $p_0, \dots, p_{n-1} \in \CQQ[X_0, \dots, X_{l-1},Y]$ such that 
\begin{enumerate}
\item $t_0, \dots, t_{l-1} \in T$;
\item $\forall j \in s \exists i \in n(a_i \notin \CQQ(T \setminus \bp{t_j}))$;
\item $d= G(\bp{t_0, \dots, t_{l-1}})$,  where $G$ is provided by Lemma \ref{lemma: G};
\item for any $i \in n$ $p_i(t_0, \dots, t_{l-1}, a_i)=0$; 
\item for any $i \in n$ $\psi(f_i, \dom_{\overline{q}})$, i.e. $f_i$ is $\QQ$-uniformly continuous (Definition \ref{def:Quc});
\item for any $i \in n$, $\forall \overline{q}' \in \dom_{\overline{q}}\cap \QQ^{l} p_i(\overline{q}', f_i(\overline{q}'))$.
\item for any $i \in n$, $f_i$ is one-one in each coordinate $j$ such that $a_i \notin \CQQ(T \setminus \bp{t_j})$.
\end{enumerate}

By unfolding definition, $\chi$ is $\Sigma^1_2$. For any $a=\ap{a_0 \dots, a_{n-1}}\in \RR^n$, by using Lemma \ref{lemma: tran2} there exist $t_0 < \dots < t_{l-1}\in T$ such that $a_0,\dots, a_{n-1}\in \CQQ(t_0, \dots, t_{l-1})$.  For any $i \in n$, let $p_i \in  \CQQ[X_0, \dots, X_{l-1},Y]$ such that $p_i(t_0, \dots, t_{l-1},a_i)=0$ and $\frac{\partial p_i}{\partial y}(t_0,\dots,t_{l-1},a_i)\neq 0$.
Then thanks to the Implicit Function Theorem there exist $U \subseteq \RR^l$ and $V \subseteq \RR$ and $\tilde{f}_i: U \to V$ continuous differentiable such that $(t_0,\dots, t_{l-1})\in U$ and $\tilde{f}_i(b_0,\dots b_{l-1})=c $ if and only if $p_i(b_0,\dots, b_{l-1},c)=0$. In particular $\tilde{f}_i(t_0,\dots, t_{l-1})=a_i$. 
Let $\overline{q} \in \QQ^{2l}$ be increasing and such that both $(t_0, \dots, t_{l-1})\in \dom_{\overline{q}} \subseteq U$ and $\tilde{f}_i$ is one-one in each coordinate $j$ such that $a_i \notin \CQQ(T \setminus \bp{t_j})$. For any $i \in n$, define $f_i = \tilde{f_i}\res(\dom_{\overline{q}} \cap \QQ^{l})$. 
Hence $\chi(a, \ap{\dom_{\overline{q}},f_0, \dots, f_{n-1}, G(\bp{t_0, \dots, t_{l-1}})})$. Indeed the functions provided by the Implicit Function Theorem are continuously differentiable and therefore uniformly continuous. By Remark \ref{Remark: UnifDiff}, $f_i$ is $\QQ$-uniformly continuous.

By $\Sigma^1_2$ Novikov Kondo Addison Uniformization Theorem (see e.g. \cite[4E4]{Moschovakis}) there exists a $\Sigma^1_2$ function $\chi^*$ which uniformizes $\chi$. $\chi^*$ has a countable range, since there are countably many polynomials and given $\dom_{\overline{q}}$ and $p_0, \dots, p_{n-1}$ there exists a unique tuple of functions which satisfies the conditions above.

\begin{claim}
The coloring $\chi^*$ avoids any $(1,\Sigma^1_2)$-avoidable polynomial.
\end{claim}

Let $p \in \CQQ[X_0,\dots, X_{k-1}]$ be a $(k,n)$-ary polynomial which is not avoided by $\chi^*$. We want to prove that this polynomial is not $(1,\Sigma^1_2)$-avoidable. Let $a^0, \dots, a^{k-1}\in \RR^n$ be distinct, monochromatic in color $\ap{\dom, f_0, \dots, f_{n-1}, d}$, and such that $p(a^0,\dots, a^{k-1})=0.$

For each $j\in k$ let $a^j \in\CQQ(t_{0,j},\dots, t_{l-1,j})$, where $t_{i,j}\in (q_{2i},q_{2i+1})$. For any $i\in n$, let $f_i^*$ be the witness of Lemma \ref{Lemma: UnifDiff} for $f_i$ and put $f = (f^*_0,\dots, f^*_{n-1})$. Define $g:(q_{2l-2},q_{2l-1}) \to \RR^n$ such that 
\[
	g(x)= f(t_{0,0},\dots, t_{l-2,0},x).
\]
Note that $g$ is $\Sigma^1_1$ with parameters in $L$ since $\RR \subseteq L$. Moreover by (2) and (7) $g$ is injective. For each $j\in k$ put $e_j=t_{l-1,j}\in (q_{l-1},r_{l-1})$. By Lemma \ref{lemma: G} they are all distinct. Indeed if there are $j, j' \in k$ such that $e_j =e_{j'}$, since $G(\bp{t_{0,j},\dots, t_{l-1,j}})= d = G(\bp{t_{0,j'},\dots, t_{l-1,j'}})$, we have for any $i \in l$, $t_{i,j}=t_{i,j'}$. Therefore for any $m \in n$: $a^j_m = f_m(t_{0,j}, \dots, t_{l-1,j}) = f_m(t_{0,j'}, \dots, t_{l-1,j'}) = a^{j'}_m.$
 To obtain our assertion we need to show that for any $\alpha: (q_{2l-2},q_{2l-1}) \to (q_{2l-2},q_{2l-1})$,
$
	p(g(\alpha(e_0)), \dots, g(\alpha(e_{k-1})))=0.
$

Fix a function $\alpha$. Let $h:((q_0, q_1)\times \dots \times (q_{2l-2},q_{2l-1}))^k \to \RR$ be such that 
\begin{multline*}
	h((y_{0,0}\dots, y_{l-1,0}),\dots, (y_{0,k-1},\dots, y_{l-1,k-1})) \\
	= p(f(y_{0,0},\dots, y_{l-1,0}),\dots,f(y_{0,k-1},\dots, y_{l-1,k-1})).
\end{multline*}
And for any $i \in n-1$ put $\beta(t_{0,i},\dots, t_{l-1,i})=(t_{0,0},t_{1,0},\dots, t_{l-2,0},\alpha(t_{l-1,i}))$. Then by Lemma \ref{lemma: beta}\footnote{For any $i \in n$, if $f_i(q'_0, \dots, q'_{l-1})=x$ then $p(q'_0, \dots, q'_{l-1},x)=0$. It follows $f_i \in \CQQ^{\dom_{\overline{q}} \cap \QQ^{l}}$. Hence both $f$ and $h$ are $\CQQ$ definable analytical functions.},
$
	p(f\beta(t_{0,0},\dots, t_{l-1,0}),\dots,f\beta(t_{0,k-1},\dots, t_{l-1,k-1}))=0
$
and
\begin{multline*}
	p(g\alpha(e_0),\dots, g\alpha(e_{k-1}) = p(g(\alpha(t_{l-1,0})), \dots, g(\alpha(t_{l-1,k-1}))) \\
	=p(f\beta(t_{0,0},\dots, t_{l-1,0}),\dots,f\beta(t_{0,k-1},\dots, t_{l-1,k-1}))=0.  
\end{multline*}
\qedhere
\end{proof}

By using Theorem \ref{theorem: 1avoidance} we can prove the $\Sigma^1_2$ version of the equivalence by Erd\H{o}s and Komj\'{a}th. In order to do that observe that the $(3,2)$-ary polynomial  $\tilde{p}(x_0,x_1,x_2)$ we defined in Notation \ref{notation} is $(1, \Sigma^1_2)$-avoidable and not $(2,\Sigma^1_2)$-avoidable.The proof directly follows the one of Lemma \ref{lemma: avoidptilde}.
\begin{theorem}
	$\RR \subseteq L$ if and only if there exists a $\Sigma^1_2$ coloring of the plane with countably many colors with no monochromatic right-angled triangle. 
\end{theorem}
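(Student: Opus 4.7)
The plan is to reduce the theorem to the two polynomial-avoidance equivalences just established, Theorem \ref{theorem: 1avoidance} and Theorem \ref{theorem: 2avoidance}, applied to the specific $(3,2)$-ary polynomial $\tilde{p}(x_0,x_1,x_2)$ of Notation \ref{notation}. Recall that for distinct $a_0,a_1,a_2\in\RR^2$, the equation $\tilde{p}(a_0,a_1,a_2)=0$ says exactly that the three points form a right-angled triangle (with right angle at $a_0$). So a $\Sigma^1_2$ coloring $\chi:\RR^2\to\omega$ with no monochromatic right-angled triangle is exactly a $\Sigma^1_2$ coloring that avoids $\tilde{p}$, i.e.\ a witness that $\tilde{p}$ is $\Sigma^1_2$-avoidable. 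The whole statement therefore becomes: $\RR\subseteq L$ if and only if $\tilde{p}$ is $\Sigma^1_2$-avoidable.

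For the forward direction, I would first check that $\tilde{p}$ is $(1,\Sigma^1_2)$-avoidable; this is the effective analog of the classical fact from Lemma \ref{lemma: avoidptilde}, and the argument is literally the same: given $r\in\RR\cap L$, any $\Sigma^1_2(r)$ function $g:\RR\to\RR^2$ one-one in each coordinate, and three distinct $e_0,e_1,e_2\in\RR$, define $\alpha:\RR\to\RR$ by $\alpha(e_2)=e_1$ and $\alpha(x)=x$ otherwise. This $\alpha$ is $\Sigma^1_2$ (indeed $\Delta^1_1$) in the parameters $e_1,e_2$, and $g\alpha(e_1)=g\alpha(e_2)$, so the three images are not three distinct points, hence trivially not a right-angled triangle, giving $\tilde{p}(g\alpha(e_0),g\alpha(e_1),g\alpha(e_2))\neq 0$. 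Since $\tilde{p}$ is $(1,\Sigma^1_2)$-avoidable and $\RR\subseteq L$, Theorem \ref{theorem: 1avoidance} produces a $\Sigma^1_2$ coloring of $\RR^2$ with countably many colors that avoids $\tilde{p}$, which is the desired coloring.

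For the converse, assume a $\Sigma^1_2$ coloring $\chi$ of $\RR^2$ in $\omega$ colors with no monochromatic right-angled triangle; equivalently, $\tilde{p}$ is $\Sigma^1_2$-avoidable. Suppose towards a contradiction that $\RR\not\subseteq L$. Then by Theorem \ref{theorem: 2avoidance}, $\tilde{p}$ would be $(2,\Sigma^1_2)$-avoidable. But this contradicts the effective analog of the second half of Lemma \ref{lemma: avoidptilde}: take $r=0$, let $g:\RR^2\to\RR^2$ be the identity (clearly $\Sigma^1_2$ and one-one in each coordinate), and pick $e_0=(0,0)$, $e_1=(1,0)$, $e_2=(0,1)$. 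For every coordinately induced $\alpha=(\alpha_0,\alpha_1):\RR^2\to\RR^2$, the images $\alpha(e_0)=(\alpha_0(0),\alpha_1(0))$, $\alpha(e_1)=(\alpha_0(1),\alpha_1(0))$, $\alpha(e_2)=(\alpha_0(0),\alpha_1(1))$ share a coordinate pairwise in the pattern of a right angle at $\alpha(e_0)$, so $\tilde{p}(g\alpha(e_0),g\alpha(e_1),g\alpha(e_2))=\tilde{p}(\alpha(e_0),\alpha(e_1),\alpha(e_2))=0$ (even when the triangle degenerates). This contradiction forces $\RR\subseteq L$.

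There is no real obstacle: once one notices that $\tilde{p}$ is a fixed $\CQQ$-definable polynomial and both halves of Lemma \ref{lemma: avoidptilde} are witnessed by trivially $\Delta^1_1$ data (a swap map, the identity, and three rational points), the effective counterparts go through verbatim. The only point that deserves a line of attention is that in the proof of $(1,\Sigma^1_2)$-avoidability the map $\alpha$ must qualify as $\Sigma^1_2(r')$ for some real parameter $r'$, but $\alpha$ is in fact $\Delta^1_1$ in the parameters $e_1,e_2$, so this is immediate. The real work has all been absorbed into Theorems \ref{theorem: 2avoidance} and \ref{theorem: 1avoidance}; the present theorem is simply the instantiation of that avoidance machinery at $\tilde{p}$.
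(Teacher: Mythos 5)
Your proof is correct, and the forward direction ($\RR \subseteq L$ gives the coloring) is exactly the paper's argument: check that $\tilde{p}$ is $(1,\Sigma^1_2)$-avoidable by the swap map of Lemma \ref{lemma: avoidptilde}, then invoke Theorem \ref{theorem: 1avoidance}. For the converse you take a slightly different (but equally valid) route. The paper argues directly: if $\RR \nsubseteq L$, Proposition \ref{theorem: CD}.1 applied with $k=2$ to any $\Sigma^1_2$ coloring $f:\RR^2\to\omega$ yields a monochromatic set $C\times D$ with $|C|=|D|=2$, i.e.\ a monochromatic axis-parallel rectangle in the plane, which already contains a monochromatic right-angled triangle. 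You instead route through Theorem \ref{theorem: 2avoidance}: if the coloring existed and $\RR \nsubseteq L$, then $\tilde{p}$ would be $(2,\Sigma^1_2)$-avoidable, contradicting the identity-function computation showing it is not; this mirrors the classical Proposition \ref{proposition: viceversaSchmerl}.1. The two arguments bottom out in the same Törnquist--Weiss rectangle theorem (Theorem \ref{theorem: 2avoidance} is itself proved from Proposition \ref{theorem: CD}.1), so your version is a thin wrapper around the paper's; the paper's direct appeal is marginally shorter, while yours makes the parallel with Schmerl's avoidance framework explicit. Your verification that $\tilde{p}(\alpha(e_0),\alpha(e_1),\alpha(e_2))=0$ for every coordinately induced $\alpha$, degenerate triangles included, is the right point to check and is handled correctly, since $\tilde{p}(x_0,x_1,x_2)=2(x_1-x_0)\cdot(x_2-x_0)$ vanishes whenever the two difference vectors are orthogonal or zero.
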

\begin{proof}
\vgt{$\Rightarrow$}. Assume that $\RR \nsubseteq L$. By applying Proposition \ref{theorem: CD}.1 for every $f: \RR^2 \to \omega$  $\Sigma^1_2$ there exists a monochromatic rectangle and so we are done. 

\vgt{$\Leftarrow$}. Assume that $\RR \subseteq L$. Recall that $\tilde{p} (x_0,x_1,x_2)$ is $(1, \Sigma^1_2)$-avoidable. By Theorem \ref{theorem: 1avoidance} it is $\Sigma^1_2$-avoidable. 
\end{proof}

To conclude observe that by using the $\Sigma^1_2$-version of Erd\H{o}s-Komj\'{a}th equivalence and the properties of $\tilde{p}(x_0,x_1,x_2)$ we can easily prove the vice versa of Theorem \ref{theorem: 2avoidance} and of Theorem \ref{theorem: 1avoidance} as in Proposition \ref{proposition: viceversaSchmerl}, therefore we get

\begin{proposition}
The following are equivalent:
\begin{itemize}
\item $\RR \subseteq L$;
\item every $\Sigma^1_2$-avoidable polynomial is $(2,\Sigma^1_2)$-avoidable;
\item every $(1,\Sigma^1_2)$-avoidable polynomial is $\Sigma^1_2$-avoidable.
\end{itemize}
\end{proposition}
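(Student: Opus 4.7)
The plan is to follow the scheme of Proposition \ref{proposition: viceversaSchmerl}: combine Theorems \ref{theorem: 2avoidance} and \ref{theorem: 1avoidance} with the $\Sigma^1_2$-version of Erd\H{o}s--Komj\'{a}th's equivalence just proved, using the distinguished polynomial $\tilde{p}(x_0,x_1,x_2)$ to close the loop. The two theorems already supply the implications departing from $\RR\subseteq L$ (resp.\ its negation); the remaining content lies in the converses, and the author explicitly signals that these are to be obtained as in Proposition \ref{proposition: viceversaSchmerl}.

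The preparatory step is a definable analogue of Lemma \ref{lemma: avoidptilde}: $\tilde{p}(x_0,x_1,x_2)$ is $(1,\Sigma^1_2)$-avoidable but is not $(2,\Sigma^1_2)$-avoidable. Both halves transcribe the classical argument. For $(1,\Sigma^1_2)$-avoidability, given $r\in\RR\cap L$, any $\Sigma^1_2(r)$ function $g\colon\RR\to\RR^2$ one-one in each coordinate, and any distinct $e_0,e_1,e_2\in\RR$, take $\alpha$ to send $e_2$ to $e_1$ and act as the identity elsewhere: this $\alpha$ is $\Sigma^1_2$ in the parameter $\ap{e_0,e_1,e_2}$, so $r'=\ap{e_0,e_1,e_2}$ witnesses the definability clause, and the images $g\alpha(e_0),g\alpha(e_1),g\alpha(e_2)$ reduce to two distinct points, ruling out a right-angled triangle. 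The non-$(2,\Sigma^1_2)$-avoidability is Lemma \ref{lemma: avoidptilde} verbatim: the identity $g$ on $\RR^2$ together with vertices $(0,0),(1,0),(0,1)$ is an obstruction requiring no definability input on $\alpha$, because any coordinately induced $\alpha$ maps them to the corners of a (possibly degenerate) right-angled triangle.

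With this auxiliary lemma in hand the converses proceed exactly as in Proposition \ref{proposition: viceversaSchmerl}. If every $(1,\Sigma^1_2)$-avoidable polynomial is $\Sigma^1_2$-avoidable, then $\tilde{p}$ is $\Sigma^1_2$-avoidable, so the $\Sigma^1_2$ Erd\H{o}s--Komj\'{a}th equivalence gives $\RR\subseteq L$; this closes the converse of Theorem \ref{theorem: 1avoidance}. The companion converse of Theorem \ref{theorem: 2avoidance} is obtained by the same contradiction pattern used in Proposition \ref{proposition: viceversaSchmerl}.(1), again invoking the auxiliary lemma on $\tilde{p}$ to rule out the incompatible side. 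Chaining these converses with Theorems \ref{theorem: 2avoidance} and \ref{theorem: 1avoidance} yields the three-way equivalence.

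The only verification specific to the $\Sigma^1_2$ setting is the one-line check that the piecewise-defined $\alpha$ appearing in the auxiliary lemma is $\Sigma^1_2$ in its parameter, and this is immediate: its graph is a finite Boolean combination of equalities in $\ap{e_0,e_1,e_2}$. Everything else is a mechanical mirror of the classical argument, with $\tilde{p}$ providing the separating witness on both sides.
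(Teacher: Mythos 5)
Your reconstruction follows exactly the route the paper intends: the definable analogue of Lemma \ref{lemma: avoidptilde} for $\tilde{p}$, followed by the two converses in the pattern of Proposition \ref{proposition: viceversaSchmerl}. Both of those steps are carried out correctly (the piecewise $\alpha$ is indeed $\Sigma^1_2$ in the parameter $\ap{e_0,e_1,e_2}$, and the identity-function obstruction to $(2,\Sigma^1_2)$-avoidability needs no definability hypothesis on $\alpha$, since it defeats every coordinately induced map).

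The gap is in your last sentence. What your argument actually establishes, once chained with Theorems \ref{theorem: 2avoidance} and \ref{theorem: 1avoidance}, is the pair of equivalences: $\RR \nsubseteq L$ if and only if every $\Sigma^1_2$-avoidable polynomial is $(2,\Sigma^1_2)$-avoidable, and $\RR \subseteq L$ if and only if every $(1,\Sigma^1_2)$-avoidable polynomial is $\Sigma^1_2$-avoidable. This mirrors the classical picture, where ``every avoidable polynomial is $2$-avoidable'' is equivalent to $\neg$CH while ``every $1$-avoidable polynomial is avoidable'' is equivalent to CH; the two statements are equivalent to opposite hypotheses, not to each other. Consequently the three items as listed cannot all be equivalent: under $\RR \subseteq L$ the polynomial $\tilde{p}$ is $\Sigma^1_2$-avoidable by the definable Erd\H{o}s--Komj\'{a}th theorem, yet it is never $(2,\Sigma^1_2)$-avoidable, so the second item fails exactly when the first holds. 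The second bullet should be paired with $\RR \nsubseteq L$ (equivalently, replaced by its negation). Your proof silently inherits this mismatch from the statement and asserts the three-way equivalence anyway; the correct conclusion of your otherwise sound argument is the corrected pair of equivalences, and the step ``chaining these converses yields the three-way equivalence'' does not go through as written.
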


\section{Open Questions}

The arguments presented for Theorem \ref{theorem: EKdefinable} and Theorem \ref{theorem: ZoliDefinable} require that the countably many subsets are uniformly definable. This is needed to provide a $\Sigma^1_2$ coloring in the proofs of the second implications. Therefore the first natural questions are
\begin{question}
 Assume that the set of all real numbers can be decomposed into a countably many (possibly non uniformly) $\Sigma^1_2$ definable rationally independent subsets. Does $\RR \subseteq L$ hold? 
\end{question}
\begin{question}
 Assume that the set of all transcendental reals is the union of countably many (possibly non uniformly) $\Sigma^1_2$ definable algebraically independent subsets. Does $\RR \subseteq L$ hold? 
\end{question}

As observed in Section \ref{section: Zoli} our definable version of Zoli's equivalence produces countably many algebraically independent subsets which are uniformly $\Sigma^1_2$ definable. Therefore we wonder whether $\RR \subseteq L$ implies that the set of all transcendental reals is the union of countably many uniformly $\Sigma^1_2$ definable transcendence bases. In particular
\begin{question}
 Given an algebraically independent subset $A \subseteq \RR$ which is $\Delta^1_2$ is it possible to define a $\Delta^1_2$ transcendence basis which contains $A$? 
\end{question}

A more general natural question which arises from this work is  
\begin{question}\label{question1}
For which inner model does the $\Sigma^1_3$ (or the more general $\Sigma^1_n$) definable version hold?
\end{question}
By considering the arguments used in the proofs, Question \ref{question1} can be reformulated as: \vgt{which inner model has the perfect set property for $\Sigma^1_n$, a $\Delta^1_n$ strong well-ordering and $\Sigma^1_n$ absoluteness?} As suggested by Alessandro Andretta a possible model for $\Sigma^1_{2n}$ could be the inner model for $n$-many Woodin cardinals. Anyway, since as far as we know there is not a proof of Mansfield Theorem's analogous for such models, the argument is not straightforward. 

Finally, it seems that for any equivalent form of CH is possible to prove its $\Sigma^1_2$ definable version. Hence we wonder: 
\begin{question}
Is there some general argument which provides, by assuming large cardinal hypotheses, the existence of a proof for the $\Sigma^1_2$ corresponding counterpart?
\end{question} 

\paragraph{Acknowledgement.}
I would like to thank Alessandro Andretta for his precious suggestions, remarks and for introducing me to the topic in the first place. I am grateful to Rapha\"{e}l Carroy for his careful readings, advices and corrections. I am also thankful to the Logic Group of Torino for the patience and the useful comments during my talks. 

\bibliographystyle{plain}
\bibliography{biblio}
\end{document}